\numberwithin{equation}{section}
\newtheorem{lemma}[equation]{Lemma}
\newtheorem{question}[equation]{Question}
\newtheorem{thm}[equation]{Theorem}
\newtheorem{cor}[equation]{Corollary}
\newtheorem{prop}[equation]{Proposition}
\theoremstyle{remark}
\theoremstyle{definition}
\newtheorem{example}[equation]{Example}
\DeclareMathOperator{\charp}{{char}}
\newcommand{\Z}{{\mathbb Z}}
\newcommand{\Q}{{\mathbb Q}}
\newcommand{\R}{{\mathbb R}}
\newcommand{\A}{{\mathbb A}}
\newcommand{\PP}{{\mathbb P}}
  \newcommand{\textcyr}[1]{%
    {\fontencoding{OT2}\fontfamily{wncyr}\fontseries{m}\fontshape{n}%
     \selectfont #1}}
\newcommand{\Sha}{{\mbox{\textcyr{Sh}}}}
\begin{document}



\title[Some Diophantine equations]{Some Diophantine equations related to positive-rank elliptic curves}

\author{Gwyneth Moreland}
\address{
  Community High School,
  Ann Arbor, MI 48104,
  USA
}
\email{gwynsm@gmail.com}

\author{Michael E. Zieve}
\address{
  Department of Mathematics,
  University of Michigan,
  Ann Arbor, MI 48109--1043,
  USA
}
\email{zieve@umich.edu}
\urladdr{www.math.lsa.umich.edu/$\sim$zieve/}

\thanks{The first author thank Community High School for enabling her to work with the second author
via the Community Resource program.
The second author was partially supported by the NSF under grant
DMS-1162181.}

\begin{abstract}
We give conditions on the rational numbers $a,b,c$ which imply that
there are infinitely many triples $(x,y,z)$ of rational numbers such that
$x+y+z=a+b+c$ and $xyz=abc$.  We do the same for the equations
$x+y+z=a+b+c$ and $x^3+y^3+z^3=a^3+b^3+c^3$.  These results rely on exhibiting
families of positive-rank elliptic curves.
\end{abstract}

\date{4 April 2013}

\maketitle


\section{Introduction}

Several authors have studied the following question:

\begin{question} \label{q}
For which triples $(a,b,c)$ of pairwise distinct rational numbers does the system of equations
$x+y+z=a+b+c$, $xyz=abc$ have infinitely many solutions in rational numbers $x,y,z$?
\end{question}

In 1989, Kelly \cite{K2} showed that this system has infinitely many rational
solutions if $a,b,c$ are positive and satisfy certain easy-to-check conditions.
In 1996, Schinzel \cite{S} adapted an argument of Mordell's \cite{Mordell}
to give a different proof of Kelly's result in case $(a,b,c)=(1,2,3)$.
Recently Zhang and Cai \cite{ZC1} extended Schinzel's proof to the case
$(a,b,c)=(1,2,n)$ for any integer $n\ge 3$.  Our first goal is to answer
Question~\ref{q} in the greatest possible generality.  We obtain the following
result:

\begin{thm} \label{sumprod}
Let $a,b,c$ be pairwise distinct rational numbers such that,
for every permutation $(A,B,C)$ of $(a,b,c)$, we have
\begin{equation} \label{first}
A(B-C)^3 \ne B(C-A)^3
\end{equation}
and
\begin{equation} \label{second}
AB^2+BC^2+CA^2 \ne 3ABC.
\end{equation}
Then there are infinitely many triples $(x,y,z)$ of rational numbers 
such that $x+y+z=a+b+c$ and $xyz=abc$.
\end{thm}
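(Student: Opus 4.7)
The plan is to recast the system $x+y+z=a+b+c$, $xyz=abc$ as the plane cubic $xy(s-x-y)=p$, where $s=a+b+c$ and $p=abc$. The hypotheses imply $p\ne 0$ and $(a+b+c)^3\ne 27\,abc$, so the projective closure is a smooth elliptic curve $E$ over $\Q$, on which the six ordered permutations of $(a,b,c)$ give rational points. Taking a flex at infinity as the origin, I transform $E$ to the Weierstrass form
\[
Y^2=X^3+s^2X^2+8ps\,X+16p^2,
\]
under which the involution $(x,y)\leftrightarrow(x,s-x-y)$ identifies the six visible points in pairs, leaving three Weierstrass points $T_1,T_2,T_3\in E(\Q)$.

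A direct computation shows $U:=(0,-4p)$ is rational $3$-torsion (its tangent line $Y=-sX-4p$ meets $E$ only at $U$, with multiplicity three). The chord through $T_1$ and $T_2$ then gives $T_1+T_2=U$, and the induced $S_3$-action (the cyclic shift $(x,y,z)\to(y,z,x)$ corresponds to translation by $-U$) gives $T_3=T_1+U$. Thus the subgroup of $E(\Q)$ spanned by the visible points is $\langle T_1\rangle+\langle U\rangle$. Because the map from $E(\Q)$ to the set of unordered triples has finite fibres, the theorem reduces to showing that $T_1$ has infinite order.

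Suppose for contradiction that $T_1$ has finite order $n$. Mazur's theorem combined with the presence of $U$ (and the absence of $(\Z/3)^2$ from Mazur's list) forces $n\in\{1,2,3,4,6,9,12\}$. The explicit formulas
\[
X(T_1)=-4bc,\quad X(2T_1)=\frac{4bc(a-b)(a-c)}{(b-c)^2},\quad 3T_1=O\iff a^2+b^2+c^2=ab+bc+ca,
\]
together with $Y(T_1)=-4bc(b-c)$, rule out $n\in\{1,2,3,6\}$ using only pairwise-distinctness of $a,b,c$. For $n=9$ the rational $3$-torsion is just $\{O,\pm U\}$, so $3T_1=\pm U$; applying Vieta's formulas to the chord through $T_1$ and $2T_1$, I expect these two conditions to collapse to the equations $a^2b+b^2c+ac^2=3abc$ and $ab^2+bc^2+ca^2=3abc$, which together exhaust the content of (\ref{second}).

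The delicate step is $n\in\{4,12\}$. Since $4U=U$, the relations $T_2=U-T_1$ and $T_3=U+T_1$ give $4T_2=U-4T_1$ and $4T_3=U+4T_1$, so exactly one of $T_1,T_2,T_3$ has order $4$ whenever any has order $4$ or $12$. It therefore suffices to show no $T_i$ is $4$-torsion. The key calculation is to verify the duplication-formula identity
\[
Y(2T_1)=\frac{4bc\bigl(b(c-a)^3-c(a-b)^3\bigr)}{(b-c)^3},
\]
which equates $4T_1=O$ with $b(c-a)^3=c(a-b)^3$, one of the three inequivalent permutation-forms of (\ref{first}); by $S_3$-symmetry the conditions $4T_2=O$ and $4T_3=O$ reduce to the other two forms. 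The main obstacle is executing this duplication computation cleanly and recognizing the factorization, but the structural match --- three $4$-torsion scenarios against the three distinct permutation-classes of (\ref{first}) --- makes the correspondence inevitable.
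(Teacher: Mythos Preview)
Your approach is essentially the paper's: the Weierstrass model $Y^2=X^3+s^2X^2+8psX+16p^2$ is isomorphic to the paper's curve via $X=4u-s^2/3$, $Y=8v$; your $3$-torsion point $U$ is the paper's $-Q$; your $T_1$ is the paper's $P_{bac}$; and your Mazur-based elimination of the orders $1,2,3,4,6,9,12$ matches the paper's analysis line for line. The correspondences you predict---$4T_i=O$ with the three permutation-forms of (\ref{first}), and $3T_1=\pm U$ with the two cyclic forms of (\ref{second})---are exactly what the paper verifies.

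There is, however, a genuine gap. Your assertion that ``the hypotheses imply $p\ne 0$ and $(a+b+c)^3\ne 27abc$'' is false on both counts. Neither (\ref{first}) nor (\ref{second}) excludes one of $a,b,c$ being zero; and for $(a,b,c)=(8,-27,1)$ one checks directly that every permutation satisfies (\ref{first}) and (\ref{second}), yet $(a+b+c)^3=(-18)^3=-5832=27\cdot 8\cdot(-27)\cdot 1=27abc$, so the cubic is singular. Several of your later steps (e.g.\ $Y(T_1)\ne 0$, $X(2T_1)\ne 0$, the derivation of $3T_1=O\iff a^2+b^2+c^2=ab+bc+ca$) silently use $abc\ne 0$, so the smooth case cannot absorb these degenerations.

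The fix is to treat these cases separately, as the paper does: if $abc=0$ the system is trivially solvable in infinitely many ways (set one variable to $0$); if $abc\ne 0$ and $(a+b+c)^3=27abc$ the curve has genus zero and one produces an explicit one-parameter family of rational points (the paper parametrizes such $(a,b,c)$ as $(c(t-1)^3,-ct^3,c)$ and writes down $S_{abc}(\Q)$ explicitly). Only after disposing of these does your elliptic-curve argument apply, and there it is correct.
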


There are infinitely many triples $(a,b,c)$ of pairwise distinct nonzero rational numbers
such that $a(b-c)^3=b(c-a)^3$; in fact we will exhibit all such triples
in Proposition~\ref{anotherparam}.
It seems unlikely that there is a simple numerical property of such a triple $(a,b,c)$ which
determines whether the system $x+y+z=a+b+c$, \,$xyz=abc$ has infinitely many solutions $(x,y,z)\in\Q^3$,
since we will show that this question is the same as determining whether an associated elliptic
curve $E_{abc}$ over $\Q$ has positive rank.  
However, we suspect that this system of equations
has infinitely many rational solutions for roughly half of
all triples $(a,b,c)$ of pairwise distinct nonzero rational numbers such that $a(b-c)^3=b(c-a)^3$.
We will provide numerical and heuristic
evidence for this belief in Section~\ref{fun}.  
Similar remarks apply for triples $(a,b,c)$ such that $ab^2+bc^2+ca^2=3abc$.

Our next result exhibits a situation
in which (\ref{first}) and (\ref{second}) automatically hold:

\begin{cor} \label{coprime}
Let $a,b,c$ be pairwise distinct integers which are pairwise coprime.
Then there are infinitely many triples $(x,y,z)$ of rational numbers
such that $x+y+z=a+b+c$ and $xyz=abc$.
\end{cor}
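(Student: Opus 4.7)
The plan is to deduce Corollary~\ref{coprime} from Theorem~\ref{sumprod} by verifying that, under pairwise coprimality, both numerical hypotheses (\ref{first}) and (\ref{second}) hold for every permutation $(A,B,C)$ of $(a,b,c)$. A preliminary observation: pairwise coprime integers cannot include two zeros, so I would split into two cases according to whether any of $a,b,c$ equals zero.

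In the main case (none of $a,b,c$ is zero), fix a permutation $(A,B,C)$ and suppose for contradiction that $A(B-C)^3 = B(C-A)^3$. Reducing this identity modulo $A$ gives $BC^3 \equiv 0 \pmod{A}$, and since $\gcd(A,B) = \gcd(A,C) = 1$ this forces $A = \pm 1$; by the symmetric mod-$B$ reduction, also $B = \pm 1$. Distinctness then leaves $\{A,B\} = \{-1,+1\}$, and substituting back reduces the equation to $(C+1)^3 = (C-1)^3$, impossible for any rational $C$. The same tactic handles (\ref{second}): reducing $AB^2 + BC^2 + CA^2 = 3ABC$ modulo $A$, $B$, and $C$ in turn (and using pairwise coprimality each time) forces each of $A,B,C$ to equal $\pm 1$, contradicting the assumption that they are three distinct integers.

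In the remaining case, say $a=0$; pairwise coprimality then forces $b,c\in\{-1,+1\}$, so $\{a,b,c\}=\{-1,0,1\}$. Since only finitely many permutations and two explicit polynomial (in)equalities remain, this case is dispatched by direct substitution. The only real subtlety is that the mod-$A$ reduction argument degenerates when $A=0$, which is exactly why this small case must be handled separately; beyond that, the proof is a direct application of coprimality.
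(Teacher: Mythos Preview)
Your argument is correct and coincides with the paper's own proof. The paper isolates the coprimality argument as a separate lemma (Lemma~\ref{lc}) and then handles the residual case $\{a,b,c\}=\{-1,0,1\}$ by direct check, exactly as you do; the only cosmetic difference is that in the nonzero case the paper, after reducing to $\{A,B\}=\{1,-1\}$, rewrites $B(C-A)^3$ as $A(A-C)^3$ and concludes $B-C=A-C$, whereas you reach the equivalent contradiction $(C+1)^3=(C-1)^3$.
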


Kelly~\cite{K2} gave conditions on $a,b,c$ which ensure that the system of
equations $x+y+z=a+b+c$, \,$xyz=abc$ has
infinitely many \textit{positive} rational solutions.  We recover his result as a consequence of
Theorem~\ref{sumprod}:

\begin{cor}[Kelly] \label{sumprodpos}
Let $a,b,c$ be pairwise distinct positive rational numbers such that \emph{(\ref{first})} holds
for every permutation $(A,B,C)$ of $(a,b,c)$.  Then there are infinitely
many triples $(x,y,z)$ of positive rational numbers such that
$x+y+z=a+b+c$ and $xyz=abc$.
\end{cor}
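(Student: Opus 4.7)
The plan is to deduce Corollary~\ref{sumprodpos} from Theorem~\ref{sumprod} in two moves: first verify that hypothesis (\ref{second}) is automatic in the positive case, and then upgrade the ``infinitely many rational solutions'' conclusion to the positive setting via a real-density argument on the associated elliptic curve $E_{abc}$.

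For the first move, the AM--GM inequality applied to the positive reals $AB^2, BC^2, CA^2$ gives
\[
AB^2 + BC^2 + CA^2 \;\ge\; 3\sqrt[3]{A^3B^3C^3} \;=\; 3ABC,
\]
with equality iff $AB^2=BC^2=CA^2$. Dividing these equalities by $B$, $C$, $A$ in turn yields $AB=C^2$, $BC=A^2$, $CA=B^2$, whence $B^3 = B\cdot CA = A\cdot BC = A^3$, forcing $A=B$ and contradicting distinctness. So (\ref{second}) holds for every permutation $(A,B,C)$ of $(a,b,c)$, and (\ref{first}) is given; Theorem~\ref{sumprod} then provides infinitely many triples $(x,y,z)\in\Q^3$ with $x+y+z=a+b+c$ and $xyz=abc$.

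For the second move, these triples correspond to rational points on the elliptic curve $E:=E_{abc}$ referred to after Theorem~\ref{sumprod}, so $E(\Q)$ is infinite. Let $E(\R)^0$ denote the identity component of $E(\R)$ in the real-analytic topology; then $H:=E(\Q)\cap E(\R)^0$ has index $\le 2$ in $E(\Q)$, hence is infinite. Because $E(\R)^0\cong \R/\Z$ as a topological group, any infinite subgroup is dense, so $H$ is dense in $E(\R)^0$; each coset of $H$ in $E(\Q)$ is therefore dense in the component of $E(\R)$ that it meets. The point $P_0$ corresponding to $(a,b,c)$ is itself a rational point of $E$, so rational points accumulate at $P_0$. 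Since the positivity region $\{x,y,z>0\}$ is open in $\R^3$ and contains $(a,b,c)$, its pullback to $E(\R)$ is an open neighborhood of $P_0$; infinitely many rational points of $E$ lie in this neighborhood, producing infinitely many positive rational solutions.

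The main obstacle is the density step: one must know that the birational identification between the affine solution set $\{x+y+z=a+b+c,\ xyz=abc\}$ and the elliptic curve $E_{abc}$ is a local analytic isomorphism at $P_0$, so that openness and real-topological density transfer correctly between the two models. This is standard once the explicit Weierstrass model for $E_{abc}$ is in hand (the Jacobian of the solution curve at $(a,b,c)$ has rank $2$, as $(1,1,1)$ and $(bc,ac,ab)$ are linearly independent for pairwise distinct $a,b,c$), but it is the one point requiring care.
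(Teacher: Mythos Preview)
Your argument is correct and follows essentially the same route as the paper: verify (\ref{second}) via AM--GM on $AB^2,BC^2,CA^2$, deduce that $E_{abc}(\Q)$ is infinite, and then use a real-density argument to find infinitely many rational points near the one corresponding to $(a,b,c)$.

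The differences are in packaging. For the density step the paper simply invokes the classical Poincar\'e--Hurwitz theorem (any nonsingular plane cubic over $\Q$ with infinitely many rational points has them dense in each real arc they meet), whereas you reprove this by hand via $E(\R)^0\cong\R/\Z$ and the fact that an infinite subgroup of a circle is dense. For the transfer between the solution surface and the Weierstrass model the paper has already set up an explicit homeomorphism $\rho\colon S_{abc}(\R)\to E_{abc}(\R)\setminus I_{abc}$ (Lemma~\ref{E}), so no local Jacobian computation is needed. One small point you leave implicit: you should note that $E_{abc}$ really is an elliptic curve here, i.e.\ $(a+b+c)^3\ne 27abc$; this follows from AM--GM applied to $a,b,c$ themselves, and the paper records it explicitly.
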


The analogue of Corollary~\ref{coprime} for positive solutions is as follows:

\begin{cor} \label{coprimepos}
Let $a,b,c$ be pairwise distinct positive integers which are pairwise coprime.  Then there are
infinitely many triples $(x,y,z)$ of positive rational numbers such that
$x+y+z=a+b+c$ and $xyz=abc$.
\end{cor}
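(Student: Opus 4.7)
The plan is to reduce to Corollary~\ref{sumprodpos}: it suffices to show that condition~(\ref{first}) holds for every permutation $(A,B,C)$ of $(a,b,c)$, since positive integers are in particular positive rationals, and pairwise distinct by hypothesis.

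So suppose for contradiction that $A(B-C)^3=B(C-A)^3$ for some permutation $(A,B,C)$ of $(a,b,c)$. Since $A$, $B$, $C$ are pairwise coprime positive integers, I would reduce the equation modulo $B$ and modulo $A$ separately. Modulo $B$, $(B-C)^3\equiv -C^3$, so $B\mid AC^3$; combined with $\gcd(A,B)=\gcd(B,C)=1$, this forces $B=1$. Symmetrically, modulo $A$, $(C-A)^3\equiv C^3$, so $A\mid BC^3$, and with $\gcd(A,B)=\gcd(A,C)=1$ this forces $A=1$. Then $A=B=1$, contradicting the assumption that $a,b,c$ are pairwise distinct.

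Hence (\ref{first}) holds for every permutation of $(a,b,c)$, and the conclusion follows from Corollary~\ref{sumprodpos}. I do not anticipate any real obstacle: the divisibility argument is elementary, and the main work of the paper is encapsulated in Corollary~\ref{sumprodpos}. In effect, Corollary~\ref{coprimepos} is the ``positive'' analogue of Corollary~\ref{coprime}, and should admit essentially the same pairwise-coprimality argument to verify (\ref{first}); the only difference is that we invoke the positive-solutions version of the main theorem instead.
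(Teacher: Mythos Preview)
Your proposal is correct and follows essentially the same route as the paper: the paper deduces Corollary~\ref{coprimepos} from Corollary~\ref{sumprodpos} together with Lemma~\ref{lc}, whose proof of condition~(\ref{first}) is the same divisibility argument you give (showing $A,B\in\{\pm1\}$, hence $A=B=1$ in the positive case). The only cosmetic difference is that the paper packages this argument into a separate lemma that also verifies~(\ref{second}), which is not needed here.
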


We will also prove analogues of Theorem~\ref{sumprod} and Corollary~\ref{sumprodpos} for the pair of
equations $x+y+z=a+b+c$ and $x^3+y^3+z^3=a^3+b^3+c^3$.  This system has been studied in the physics
literature, in the contex of zeros of $6j$ Racah coefficients \cite{BL}.  We will prove the following results.

\begin{prop} \label{cube}
Let $a,b,c$ be pairwise distinct rational numbers such that, for every permutation $(A,B,C)$ of $(a,b,c)$, we have
\begin{equation} \label{third}
(A+B)(A-B)^3 \ne (B+C)(B-C)^3
\end{equation}
and
\begin{equation} \label{fourth}
AB^2+BC^2+CA^2\ne A^3+B^3+C^3.
\end{equation}
Then there are infinitely many triples $(x,y,z)$ of rational numbers such that $x+y+z=a+b+c$ and $x^3+y^3+z^3=a^3+b^3+c^3$.
\end{prop}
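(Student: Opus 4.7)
The plan is to identify the rational triples $(x,y,z)$ with $x+y+z=a+b+c$ and $x^3+y^3+z^3=a^3+b^3+c^3$ with the rational points on a plane cubic, and then either observe this cubic is rational (in which case infinitely many solutions exist trivially) or show it is an elliptic curve over $\Q$ of positive rank.

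Set $s=a+b+c$, $t=a^3+b^3+c^3$, and $z=s-x-y$, so the system becomes the affine cubic $C\colon(s-x-y)^3+x^3+y^3=t$ in the variables $x,y$. The cubic part of the defining polynomial is $-3xy(x+y)$, so the projective closure $\overline{C}\subset\PP^2$ meets the line at infinity in three rational points $I_1=[1{:}0{:}0]$, $I_2=[0{:}1{:}0]$, $I_3=[1{:}{-}1{:}0]$. A computation with partial derivatives shows that $\overline{C}$ is singular exactly when $(a+b)(b+c)(c+a)=0$ or $9t=s^3$; in either of these degenerate cases the irreducible cubic $\overline{C}$ is birational to $\PP^1$, so has infinitely many rational points (and in fact explicit one-parameter families can be written down, e.g., if $a+b=0$ the system reduces to $(x-c)(y-c)(x+y)=0$). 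So assume henceforth that $\overline{C}$ is smooth; one then checks that each $I_j$ is a flex, so $\overline{C}$ with origin $I_1$ is an elliptic curve $E$ over $\Q$.

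Next I would enumerate the rational points: the six permutations of $(a,b,c)$ give six affine rational points on $C$, and together with $I_1,I_2,I_3$ this yields nine rational points. Each line $x=\alpha$, $y=\beta$, or $x+y=\gamma$ (with $\alpha,\beta,\gamma\in\{a,b,c,a+b,b+c,c+a\}$) meets $\overline{C}$ in two of the permutations and one of the $I_j$, while the line at infinity meets $\overline{C}$ in $I_1,I_2,I_3$. Because collinear points on $\overline{C}$ sum to zero in the flex-based group law, these relations force $3I_2=0$. Hence $E(\Q)_{\mathrm{tors}}\supseteq\langle I_2\rangle\cong\Z/3\Z$, and by Mazur's theorem $E(\Q)_{\mathrm{tors}}$ is one of $\Z/3\Z,\Z/6\Z,\Z/9\Z,\Z/12\Z$, or $\Z/2\Z\oplus\Z/6\Z$; in particular, the order of every torsion point lies in $\{1,2,3,4,6,9,12\}$.

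I would then show that $P=(a,b)$ has infinite order in $E(\Q)$. The tangent line to $\overline{C}$ at $P$ is $(a^2-c^2)(x-a)+(b^2-c^2)(y-b)=0$, and an explicit calculation gives its third intersection with $\overline{C}$ as
\[
P'=\Bigl(a+\tfrac{D(b+c)}{(a-b)(a-c)},\; b+\tfrac{D(a+c)}{(a-b)(b-c)}\Bigr),\qquad D=a^2+b^2+c^2-ab-bc-ca,
\]
so $2P=-P'$ in the group law. For each possible torsion order $n\in\{2,3,4,6,9,12\}$, iterating the chord-tangent construction yields an explicit rational expression for $nP$, and the vanishing condition $nP=0$ becomes a polynomial identity in $a,b,c$. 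The main obstacle of the proof is to verify that every such identity is, up to an $S_3$-permutation of $(a,b,c)$, a case of $(A+B)(A-B)^3=(B+C)(B-C)^3$ or $AB^2+BC^2+CA^2=A^3+B^3+C^3$, and is therefore excluded by hypothesis (\ref{third}) or (\ref{fourth}). Once this verification is complete, $P$ is non-torsion, so the subgroup $\langle P\rangle\subset E(\Q)$ is infinite, yielding infinitely many rational triples $(x,y,z)$ as required. The matching of identities parallels the analogous step in the proof of Theorem~\ref{sumprod} (with $xyz=abc$ replaced by $x^3+y^3+z^3=a^3+b^3+c^3$); one expects (\ref{third}) to arise from the $2$- and $4$-torsion conditions and (\ref{fourth}) from the $3$-, $6$-, $9$-, and $12$-torsion conditions, but carrying out the case analysis for $n=9$ and $n=12$ requires careful symmetric-function manipulations on division polynomials of substantial degree.
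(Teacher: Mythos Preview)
Your strategy is sound in outline and genuinely different from the paper's, but it is incomplete at exactly the step you flag as ``the main obstacle'': you never verify that the torsion conditions on $P=(a,b)$ reduce to the forbidden equations (\ref{third}) and (\ref{fourth}). That is not a routine bookkeeping step you can wave through; in the analogous Theorem~\ref{sharp} it is the entire content of the argument, and here you would have to redo it in new coordinates. Your expectation of which condition matches which torsion order is also slightly off: once one shows (as in the paper's proof of Theorem~\ref{sharp}) that the order of $P$ cannot divide~$6$, the only live cases are order~$4$ and order~$9$, and these correspond respectively to (\ref{third}) and (\ref{fourth}); the cases $n=2,3,6,12$ never need separate treatment.

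The paper avoids all of this by an affine change of variables. Writing $(d,e,f)=\psi((a,b,c))=\bigl(\tfrac{b+c}{2},\tfrac{a+c}{2},\tfrac{a+b}{2}\bigr)$, Lemma~\ref{coord} shows that $U_{abc}(\Q)$ is in bijection with $S_{def}(\Q)$, so one may simply invoke Theorem~\ref{sumprod} for the triple $(d,e,f)$. The only work left is to check that conditions (\ref{first}) and (\ref{second}) on $(d,e,f)$ are algebraically equivalent to (\ref{third}) and (\ref{fourth}) on $(a,b,c)$, which is a two-line identity. Thus all the elliptic-curve analysis---Mazur's theorem, the order-$4$ and order-$9$ computations---is inherited for free from Theorem~\ref{sharp} rather than redone.

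Your direct approach would ultimately recover the same elliptic curve (your $\overline{C}$ is isomorphic over $\Q$ to $E_{def}$ via $\psi$), so the missing verification is in principle equivalent to the one already carried out in Theorem~\ref{sharp}. But as written the proposal stops short of a proof; the substitution trick is what turns the argument into three lines.
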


\begin{prop} \label{poscube}
Let $a,b,c$ be pairwise distinct positive rational numbers such that every permutation $(A,B,C)$ of $(a,b,c)$ satisfies
\emph{(\ref{third})}.  Then there are infinitely many triples $(x,y,z)$ of positive rational numbers such that $x+y+z=a+b+c$
and $x^3+y^3+z^3=a^3+b^3+c^3$.
\end{prop}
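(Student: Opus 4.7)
The plan is to reduce Proposition~\ref{poscube} to Proposition~\ref{cube} by showing that condition (\ref{fourth}) is automatic when $a,b,c$ are positive, and then to promote the resulting rational solutions to positive-coordinate rational solutions via a density argument on the associated elliptic curve.

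First I would verify (\ref{fourth}). For distinct positive reals ordered so that $A<B<C$, the sequences $(A,B,C)$ and $(A^2,B^2,C^2)$ are both strictly increasing, so the rearrangement inequality gives
\[
AB^2+BC^2+CA^2<A^3+B^3+C^3 \quad\text{and}\quad A^2B+B^2C+C^2A<A^3+B^3+C^3.
\]
As $(A,B,C)$ ranges over permutations of $(a,b,c)$, the left-hand side of (\ref{fourth}) takes only these two values (the three cyclic rotations give one value and the three transpositions give the other), so (\ref{fourth}) holds for every permutation. Combined with the assumed (\ref{third}), Proposition~\ref{cube} produces infinitely many rational triples $(x,y,z)$ with $x+y+z=a+b+c$ and $x^3+y^3+z^3=a^3+b^3+c^3$.

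Next I would extract the positive solutions. The point $P_0=(a,b,c)$ lies with all coordinates strictly positive on the real-analytic solution curve $C\subset\R^3$, so by openness of positivity there is a Euclidean neighborhood $U$ of $P_0$ in $C(\R)$ on which $x,y,z>0$. It suffices to show $U$ contains infinitely many rational points. The proof of Proposition~\ref{cube} realizes the rational solutions as $\Q$-points on an elliptic curve $E/\Q$ of positive rank; for any such $E$, the group $E(\Q)$ is dense in the identity component of the real Lie group $E(\R)$ and hence, after translation by rational representatives, in every real component of $E$ that meets $E(\Q)$. Since $P_0$ corresponds to a rational point on $E$, rational points accumulate at $P_0$ in the real topology, and all but finitely many of them lie in $U$.

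The main technical point is the density assertion, but this is standard: a non-torsion rational point $Q$ lying in the identity component $E(\R)^0\cong\R/\Z$ generates a dense subgroup, and if $Q\notin E(\R)^0$ then $2Q$ does the job instead. The remaining bookkeeping — that the birational identification between $C$ and $E$ is a local real-analytic isomorphism at $P_0$ — is immediate since $P_0$ is a smooth rational point on the affine curve. The overall structure mirrors the deduction of Corollary~\ref{sumprodpos} from Theorem~\ref{sumprod}.
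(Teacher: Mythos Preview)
Your argument is correct and follows essentially the same strategy as the paper: verify that the second hypothesis of Proposition~\ref{cube} is automatic for positive $a,b,c$, then use density of rational points on the underlying positive-rank elliptic curve near the base point $(a,b,c)$. The only cosmetic differences are that you check (\ref{fourth}) directly via the rearrangement inequality whereas the paper transforms via $\psi$ to $(d,e,f)$ and checks (\ref{second}) by AM--GM inside Lemma~\ref{AMGM}, and that you phrase the density step in terms of non-torsion points generating a dense subgroup of $E(\R)^0$ whereas the paper invokes the equivalent Poincar\'e--Hurwitz theorem.
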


Several authors have proved special cases of our results.  Besides the papers of Kelly \cite{K2},
Schinzel \cite{S}, and Zhang--Cai \cite{ZC1} mentioned previously, we note that Ren and Yang \cite{Ren}
proved Proposition~\ref{poscube} in the special case that $a$, $b$ and $c$ are three consecutive positive integers.
  Our results contradict several results in
the recent paper \cite{SS} by Sadek and El-Sissi.  The discrepancy stems from a mistake in
in the proof of \cite[Prop.~2.6]{SS}, where it is asserted
that the twelve points $P_{ij}$, $2P_{ij}$ (with $i\ne j$) are all distinct
from one another.  That is not always true, for instance it is not true when
$(a,b,c)=(3,10,24)$.  As a consequence, \cite[Prop.~2.6]{SS} and \cite[Thm.~2.7]{SS} are false, and the proof of
\cite[Thm.~3.1]{SS} is not valid.  We note, however, that the paper \cite{SS} contains interesting material
despite this mistake, for instance it uses this circle of ideas to produce high-rank elliptic curves.
For other recent work on related questions, see \cite{SSZ,U,ZC2,Z}.

This paper is organized as follows.  After some preliminary work in the next section,
we prove Theorem~\ref{sumprod} in Section~\ref{mainproof}.  Our proof crucially relies on Mazur's theorem on
rational torsion subgroups of elliptic curves \cite{M}.  In Section~\ref{fun} we prove Corollary~\ref{coprime} and discuss
Question~\ref{q} in the cases where Theorem~\ref{sumprod} does not apply.  We prove Corollaries~\ref{sumprodpos}
and \ref{coprimepos} in Section~\ref{secpos}, and in the final Section~\ref{seccube} we prove Propositions~\ref{cube}
and \ref{poscube}.


\section{From equal sums and products to ranks of elliptic curves}

In this section we translate Question~\ref{q} to 
the question of determining which elliptic curves in a certain infinite family
have positive rank.
For any $a,b,c\in\Q$, we write $s:=a+b+c$ and $p:=abc$.
Let $E_{abc}$ be the curve in $\PP^2$ whose affine equation is
\begin{equation} \label{Eabc}
v^2=u^3-\Bigl(\frac{s^4}{48}-\frac{sp}2\Bigr)u + \Bigl(\frac{s^6}{864}-\frac{s^3p}{24}+\frac{p^2}4\Bigr),
\end{equation}
and let $S_{abc}$ be the variety in $\A^3$ defined by $x+y+z=s$ and $xyz=p$.
If $p=0$ then the set of rational points $S_{abc}(\Q)$ is infinite, consisting of all permutations of all triples $(x,s-x,0)$ with $x\in\Q$.
In the more difficult case that  $p\ne 0$, we now give a precise connection between $S_{abc}(\Q)$ and $E_{abc}(\Q)$.

\begin{lemma}\label{E}
For $a,b,c\in\Q^*$, the
set of rational points $E_{abc}(\Q)$ contains
\[
I_{abc}:=\left\{\Bigl(\frac{s^2}{12}, \frac p2\Bigr), \,\Bigl(\frac{s^2}{12}, -\frac p2\Bigr), \,\mathcal{O}\right\},
\]
where $\mathcal{O}$ is the point $(0:1:0)$ in\/ $\PP^2$.
The function
\[
\rho\colon (x,y,z) \mapsto \Bigl( -\frac py + \frac{s^2}{12}, \,-\frac py(x+\frac y2 -\frac s2)\Bigr )
\]
defines a homeomorphism $\rho\colon S_{abc}(\R)\to E_{abc}(\R)\setminus I_{abc}$ whose restriction to $S_{abc}(\Q)$ induces
a bijection of $S_{abc}(\Q)$ with $E_{abc}(\Q)\setminus I_{abc}$.
\end{lemma}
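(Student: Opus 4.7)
The proof has three pieces, and I would handle them in the following order.

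\textbf{Step 1: $I_{abc}\subset E_{abc}(\Q)$.} For $\mathcal{O}$ this is automatic, and for $(s^2/12,\pm p/2)$ it is a direct substitution. Setting $u_0=s^2/12$ in the right-hand side of the defining equation of $E_{abc}$, the weight-$6$ terms in $s$ collapse via $\tfrac{1}{1728}-\tfrac{1}{576}+\tfrac{1}{864}=0$, the terms linear in $p$ (which are multiples of $s^3p$) cancel, and the constant $p^2/4$ remains, giving $v^2=p^2/4$.

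\textbf{Step 2: constructing the inverse.} The geometric picture is that, for fixed $y\ne 0$, the coordinates $x$ and $z$ of any triple $(x,y,z)\in S_{abc}$ are the two roots of $t^2-(s-y)t+p/y$, so $S_{abc}$ is generically a double cover of the $y$-line. The assignment $u=s^2/12-p/y$ is a M\"obius substitution in $y$, invertible on $u\ne s^2/12$, and unwinding the formula for $v$ suggests the candidate inverse
\[
\sigma\colon (u,v)\longmapsto \Bigl(\tfrac{s-y}{2}+\tfrac{v}{u-s^2/12},\;y,\;\tfrac{s-y}{2}-\tfrac{v}{u-s^2/12}\Bigr),\qquad y:=\tfrac{p}{s^2/12-u}.
\]
Both $\rho$ and $\sigma$ are rational functions over $\Q$, hence continuous and $\Q$-rational wherever defined. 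The denominator of $\rho$ is $y$, which is nonzero on $S_{abc}$ since $xyz=p\ne 0$; the denominator of $\sigma$ is $u-s^2/12$, which by Step~1 vanishes on $E_{abc}$ only at the two affine points of $I_{abc}$.

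\textbf{Step 3: the key identity.} It remains to verify that $\rho$ lands on $E_{abc}\setminus I_{abc}$, that $\sigma$ lands on $S_{abc}$, and that the two maps are mutually inverse; the third is immediate from the formulas. Writing $w:=s^2/12-u$, so $y=p/w$, one computes
\[
x+z=s-y,\qquad xz=\Bigl(\tfrac{s-y}{2}\Bigr)^2-\tfrac{v^2}{w^2},
\]
so $xyz=p$ is equivalent to $(sw-p)^2-4v^2=4w^3$. On the other hand, substituting $u=s^2/12-w$ into the defining equation of $E_{abc}$ and expanding yields
\[
v^2=\Bigl(\tfrac{sw-p}{2}\Bigr)^2-w^3,
\]
which is the same identity. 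Hence both containments hold simultaneously, and the continuity plus $\Q$-rationality of $\rho$ and $\sigma$ give the claimed homeomorphism on real points and bijection on rational points.

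The main obstacle is purely computational: the expansion in Step~3 requires careful bookkeeping to see that all the degree-$6$ and $s^3p$ contributions rearrange into the clean form $(sw-p)^2/4-w^3$. There is no deeper conceptual difficulty; indeed, the whole construction amounts to putting the genus-one curve $y\,t^2=y(s-y)^2-4p$ (obtained by writing $x,z=\tfrac{s-y}{2}\pm\tfrac{t}{2}$) in Weierstrass form via a standard change of variables.
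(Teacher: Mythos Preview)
Your argument is correct and follows the same route as the paper, which simply records the explicit inverse $\rho^{-1}$ and declares the verification a straightforward computation. Your substitution $w=s^2/12-u$, reducing the Weierstrass equation to $v^2=\bigl(\tfrac{sw-p}{2}\bigr)^2-w^3$, is a tidy way to carry out that computation and makes the equivalence with $xyz=p$ transparent; the paper's proof offers no additional content beyond what you have written.
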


\begin{proof}
This can be verified via a straightforward computation, in which one also verifies that
$\rho^{-1}((u,v))$ equals
\[
 \Biggl(
 \frac{ v+\frac 12 {su}-\frac 1{24} {s^3}+\frac 12 {p} }{u-\frac 1{12} {s^2}}, \,\frac{-p}{u-\frac 1{12} {s^2}}, 
 \, \frac{ -v+\frac 12 {su}-\frac 1{24} {s^3}+\frac 12 {p} }{u-\frac 1{12} {s^2}} \Biggr).\qedhere
\]
\end{proof}

In order to analyze whether the curve $E_{abc}$ has infinitely many rational points, we
now compute its genus.

\begin{lemma} \label{genus}
For $a,b,c\in\Q^*$, the
curve $E_{abc}$ has genus $0$ if $(a+b+c)^3=27abc$, and has genus $1$ otherwise.
\end{lemma}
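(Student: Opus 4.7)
The plan is to view $E_{abc}$ as a short Weierstrass cubic $v^2 = u^3 + Au + B$ with
\[
A = \frac{sp}{2} - \frac{s^4}{48}, \qquad B = \frac{s^6}{864} - \frac{s^3p}{24} + \frac{p^2}{4},
\]
and to compute its discriminant directly. A projective plane cubic in this form is smooth if and only if $4A^3 + 27B^2 \ne 0$; when it is smooth it has genus one, and when it is singular but irreducible it has geometric genus zero (a nodal or cuspidal rational curve). Since $v^2 - f(u)$ is irreducible as a polynomial in $v$ whenever $f$ has odd degree and is not a square, irreducibility is automatic for $E_{abc}$, so the lemma reduces to determining when $4A^3 + 27B^2$ vanishes.

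First I would factor $A = -s(s^3-24p)/48$ and $B = (s^6 - 36 s^3 p + 216 p^2)/864$, so that up to a fixed nonzero rational constant $4A^3 + 27B^2$ equals
\[
-s^3(s^3 - 24p)^3 + (s^6 - 36 s^3 p + 216 p^2)^2.
\]
Setting $t = s^3$ and expanding, the $t^4$, $t^3 p$, and $t^2 p^2$ contributions from the two pieces cancel pairwise, leaving $-1728 p^3 t + 46656 p^4 = -1728 p^3 (s^3 - 27p)$. Tracking the denominators then yields the clean identity
\[
4A^3 + 27B^2 \;=\; -\frac{p^3(s^3 - 27p)}{16}.
\]
Since $p = abc \ne 0$ by hypothesis, this vanishes precisely when $(a+b+c)^3 = 27 abc$, which gives the stated dichotomy.

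The only real obstacle is the bookkeeping in the expansion, since the identity above depends on three separate cancellations among the leading coefficients. I would reorganize the calculation via the substitution $t = s^3$ to reduce it to an identity in only two variables, and if needed confirm with a brief computer-algebra check before writing up the final step.
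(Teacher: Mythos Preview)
Your proof is correct and follows essentially the same approach as the paper: both observe that the Weierstrass equation defines an irreducible plane cubic whose genus is $1$ or $0$ according to whether its discriminant is nonzero or zero, and both reduce to the identity $-16(4A^3+27B^2)=p^3(s^3-27p)$. You supply the explicit expansion that the paper omits, but the argument is the same.
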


\begin{proof}
Since the affine equation for $E_{abc}$ is a Weierstrass equation, it defines an irreducible
curve of genus $0$ or $1$.  Genus $0$ occurs if and only if $\Delta=0$, where
$\Delta:=p^3(s^2-27p)$ is the discriminant of the Weierstrass equation.
\end{proof}

We conclude this section by addressing the genus zero cases.  Our next result exhibits
the triples $(a,b,c)$ for which $E_{abc}$ has genus $0$.

\begin{lemma} \label{sing}
If $a,b,c\in\Q^*$ satisfy $(a+b+c)^3=27abc$, and $a,b,c$ are not all equal,
then there is a unique $t\in\Q\setminus\{0,1\}$ such that
\[
 a=c(t-1)^3\quad and \quad  b=-ct^3.
\]
Conversely, for any $c,t\in\Q^*$ with $t\ne 1$, the above equations define elements
$a,b\in\Q^*$ such that $(a+b+c)^3=27abc$ and $a,b,c$ are not all equal; moreover,
$a,b,c$ are pairwise distinct if and only if $t\notin\{-1,\frac 12,2\}$.
\end{lemma}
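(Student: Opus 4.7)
My strategy is to handle the ``conversely'' direction by direct verification, and the main (existence) direction by exploiting a clean algebraic identity that becomes available because $s^3 = 27abc$.

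For the converse, given $c \in \Q^*$ and $t \in \Q^*$ with $t \neq 1$, I would set $a := c(t-1)^3$ and $b := -ct^3$, which forces $a,b \in \Q^*$ since $t \neq 0, 1$. A one-line expansion using $(t-1)^3 - t^3 + 1 = -3t(t-1)$ yields $a+b+c = -3ct(t-1)$, so both $(a+b+c)^3$ and $27abc$ equal $-27c^3 t^3 (t-1)^3$. For the distinctness criterion, injectivity of the real cube root on $\Q$ makes $a = b$ equivalent to $(t-1)^3 = -t^3$, i.e., $t = \tfrac{1}{2}$; similarly $a = c$ iff $t = 2$ and $b = c$ iff $t = -1$. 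Since these three $t$-values are distinct, the pairwise coincidences cannot occur simultaneously, so $a = b = c$ is automatically ruled out and the ``not all equal'' conclusion comes for free.

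For the forward direction, set $s := a+b+c$, $q := ab+bc+ca$, and $P := q - s^2/3$; I would then exploit the algebraic identity
\[
x^3 - s x^2 + qx - \tfrac{s^3}{27} = (x - s/3)^3 + Px,
\]
verified by direct expansion. Since $a,b,c$ are the roots of the left-hand side (using $abc = s^3/27$), plugging in $x = a, b, c$ yields the clean identities $y_a^3 = -Pa$, $y_b^3 = -Pb$, $y_c^3 = -Pc$, where $y_x := x - s/3$. The hypothesis that $a, b, c$ are not all equal forces $P \neq 0$ (otherwise all three $y_x$ would vanish, giving $a=b=c=s/3$), and then $a, b, c \in \Q^*$ forces $y_a, y_b, y_c \in \Q^*$. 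Setting $t := -y_b/y_c \in \Q^*$, and using $y_a + y_b + y_c = s - s = 0$ to deduce $y_a/y_c = t - 1$, the cubed ratios give $b/c = -t^3$ and $a/c = (t-1)^3$ at once. The conditions $t \neq 0$ and $t \neq 1$ correspond exactly to $b \neq 0$ and $a \neq 0$, and uniqueness is immediate because $t \mapsto (t-1)^3$ is injective on $\Q$. The only genuine obstacle I foresee is spotting the identity $(x - s/3)^3 + Px = x^3 - sx^2 + qx - s^3/27$; once it is in hand, the entire parametrization drops out with no further computation.
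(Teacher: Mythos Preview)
Your argument is correct. The converse direction matches the paper's essentially verbatim. For the forward direction, however, you and the paper take different routes to the same destination.

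The paper proceeds by first proving $a+b\ne 2c$ via an ad hoc argument (if $a+b=2c$ then the hypothesis forces $c^2=ab$, whence $(a-b)^2=0$ and $a=b=c$), then \emph{postulating} the value $t=(-a+2b-c)/(a+b-2c)$ and checking by brute-force rational-function identities that $a-c(t-1)^3$ and $b+ct^3$ are each divisible by $(a+b+c)^3-27abc$. The non-vanishing conditions $t\ne 0,1$ are then handled by repeating the $a+b\ne 2c$ argument with the letters permuted.

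Your approach is more conceptual: after the depressed-cubic shift $y_x=x-s/3$, the hypothesis $abc=s^3/27$ kills the constant term, leaving $y_x^3=-Px$ for each root; the parametrization then drops out of the ratios $y_a:y_b:y_c$ combined with $y_a+y_b+y_c=0$. This simultaneously explains \emph{why} the parametrization exists and why $t$ has the particular form it does --- indeed your $t=-y_b/y_c=-(2b-a-c)/(2c-a-b)$ is exactly the paper's guessed value. It also absorbs the paper's three separate ``$\ne 2\cdot(\text{something})$'' checks into the single observation $P\ne 0\Rightarrow y_a,y_b,y_c\ne 0$. The trade-off is that one must spot the identity $(x-s/3)^3+Px=x^3-sx^2+qx-s^3/27$, but as you note this is just the standard depressed-cubic expansion.
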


\begin{proof}
It is straightforward to verify the final sentence in the result.
Now fix $a,b,c\in\Q^*$ such that $(a+b+c)^3=27abc$, where $a,b,c$ are not all equal.
If $t\in\Q^*$ satisfies $a=r(t-1)^3$ \,and \,$b=-rt^3$, then $t^3=-b/c$, so there is at
most one choice for $t$.  It remains only to show that
there exists $t\in\Q\setminus\{0,1\}$ such that 
$a=r(t-1)^3$ \,and \,$b=-rt^3$.
We will show that these equations are satisfied for $t=(-a+2b-c)/(a+b-2c)$.
First, note that $a+b\ne 2c$: for, if $a+b=2c$ then
$(3c)^3=(a+b+c)^3=27abc$ implies $c^2=ab$, so that
$(a-b)^2=(a+b)^2-4ab=(2c)^2-4c=0$, which gives the contradiction
$a=b=(a+b)/2=c$.  Now it is straightforward to check that
\begin{align*}
a-c(t-1)^3 &= \frac{(a-c)((a+b+c)^3-27abc)}{c(a+b-2c)^3} \\
b+ct^3 &= \frac{(b-c)((a+b+c)^3-27abc)}{c(a+b-2c)^3},
\end{align*}
so that indeed $a=c(t-1)^3$ \,and \,$b=-ct^3$.
Next we show that our specified value of $t$ is neither $0$ nor $1$.
For, if $t=0$ then $a+c=2b$, and if $t=1$ then $b+c=2a$; either of these implies $a=b=c$
via the same argument we used to show that $a+b\ne 2c$.
%
This completes the proof.
\end{proof}

Finally, we determine $S_{abc}(\Q)$ when $E_{abc}$ has genus zero.

\begin{lemma} \label{singpar}
For any $c\in\Q^*$ and $t\in\Q\setminus\{-1,0,\frac 12,1,2\}$, put $a=c(t-1)^3$ and $b=-ct^3$.
Then $S_{abc}(\Q)\setminus\{c(t-t^2),c(t-t^2),c(t-t^2))\}$ equals
\[
\left\{\Bigl(\frac{ct(t-1)^3}{(u+1)(u+t)}, \,-\frac{ct(u+t)^2}{u+1}, \,\frac{ct(u+1)^2}{u+t}\Bigr)\colon
u\in\Q\setminus\{-1,-t\}\right\}.
\]
\end{lemma}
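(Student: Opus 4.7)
The plan is to verify the lemma by constructing an explicit two-sided inverse to the parametrization. By Lemma~\ref{sing}, $s := a+b+c = 3ct(1-t)$ and $p := abc = -c^3t^3(t-1)^3$, so $s/3 = c(t-t^2)$ and the excluded triple is exactly the singular point $(s/3,s/3,s/3)$ of $S_{abc}$.

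For the forward direction, for each $u \in \Q \setminus \{-1,-t\}$ I would verify that the parametrized triple $(x,y,z)$ lies in $S_{abc}(\Q)$. The product $xyz$ telescopes immediately to $p$. The sum $x+y+z$, placed over the common denominator $(u+1)(u+t)$, has numerator $ct\bigl[(t-1)^3 + (u+1)^3 - (u+t)^3\bigr]$, which factors as $-3ct(t-1)(u+1)(u+t)$ by the identity $A^3+C^3-B^3 = -3ACB$ (valid when $B=A+C$, applied with $A=t-1$, $C=u+1$, $B=u+t$), giving $x+y+z = s$. The parametrized triple cannot be the singular point, since $x = s/3$ would force $(u+1)(u+t) = -(t-1)^2$, a quadratic in $u$ whose discriminant is $-3(t-1)^2$, which is negative for $t \ne 1$; hence no rational solution exists.

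For the converse, given $(x,y,z) \in S_{abc}(\Q)$ not equal to the singular point, I would define
\[
 u := \frac{(s/3-y) + t(s/3-z)}{s/3-x}.
\]
The denominator is nonzero, since $x = s/3$ together with $x+y+z = s$ and $xyz = (s/3)^3$ would force $y = z = s/3$. The relation $(s/3-x)+(s/3-y)+(s/3-z) = 0$ then yields the concise formulas
\[
 u+1 = \frac{(t-1)(s/3-z)}{s/3-x}, \qquad u+t = \frac{(1-t)(s/3-y)}{s/3-x},
\]
so $u \in \Q$; moreover, $u \in \{-1,-t\}$ would force $z = s/3$ or $y = s/3$, each of which returns the singular point by the same argument, hence $u \in \Q \setminus \{-1,-t\}$. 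To verify that the parametrization at this $u$ recovers $(x,y,z)$, the central identity is
\[
 x(s/3-y)(s/3-z) = (s/3)(s/3-x)^2,
\]
which follows from $y+z = s-x$ and $yz = (s/3)^3/x$ by direct expansion, together with its two cyclic analogues in $y$ and $z$; these are exactly what is needed to recover each coordinate from the displayed formulas for $u+1$ and $u+t$ (after using $s/3 = -ct(t-1)$). This central identity together with its cyclic companions and the substitution step constitutes the main computational content of the proof; everything else is routine verification.
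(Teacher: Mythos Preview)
Your proof is correct and takes a genuinely different, cleaner route than the paper's.

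The paper's argument is essentially a brute-force verification: after reducing to $c=1$, it writes down an explicit (rather opaque) formula for $u$ in terms of $t,y,z$, then checks by direct computation that the denominator is nonzero, that $u\notin\{-1,-t\}$, and that $P_u=(x,y,z)$, handling the point $(a,b,c)$ as a separate special case (since the paper's denominator happens to vanish there). Each of these steps is carried out by case analysis and polynomial identities that the authors in fact verified with computer algebra.

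Your approach exploits the geometry of the situation. Because $(a+b+c)^3=27abc$, the point $(m,m,m)$ with $m=s/3$ lies on $S_{abc}$ and is its unique singular point; the translated coordinates $s/3-x$, $s/3-y$, $s/3-z$ sum to zero, and the constraint $xyz=(s/3)^3$ yields the pleasant identity $x(m-y)(m-z)=m(m-x)^2$ and its cyclic analogues. These make your formula for $u$ transparent: it is simply a ratio of the shifted coordinates, its denominator vanishes only at the singular point, and the verification that $P_u=(x,y,z)$ reduces to three one-line applications of the central identity. No special case for $(a,b,c)$ is needed, since your denominator $s/3-x$ does not vanish there.

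In short, the paper's proof is computational and opaque; yours is shorter, conceptually clearer, and explains \emph{why} the parametrization works rather than merely confirming it.
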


\begin{proof}
Fix $c\in\Q^*$ and $t\in\Q\setminus\{-1,0,\frac 12,1,2\}$, and put $a=c(t-1)^3$ \,and $b=-ct^3$.
For $A=a/c$ and $B=b/c$, the set $S_{abc}(\Q)$ is obtained from $S_{AB1}(\Q)$ by multiplying all coordinates
of all points by $c$.  Hence it suffices to prove the result in case $c=1$, and to simplify the notation we will assume
$c=1$ in what follows.  For any $u\in\Q\setminus\{-1,-t\}$, one easily checks that
\[
P_u:=\Bigl(\frac{t(t-1)^3}{(u+1)(u+t)}, \,-\frac{t(u+t)^2}{u+1}, \frac{t(u+1)^2}{u+t}\Bigr)
\]
is in 
$S_{abc}(\Q)$.
We have $P_u\ne (t-t^2,t-t^2,t-t^2)$, since otherwise by equating $x$-coordinates
we would obtain $(t-1)^2+(u+1)(u+t)=0$, which is a quadratic polynomial in $u$ whose
discriminant is the nonsquare $-3(t-1)^2$.
Now let $(x,y,z)$ be any point in $S_{abc}(\Q)$ which does not equal either $(a,b,c)$ or
$(t-t^2,t-t^2,t-t^2)$.  Since $P_0=(a,b,c)$, it suffices to prove that $(x,y,z)=P_u$ for some
$u\in\Q\setminus\{-1,-t\}$.  We will prove this for the value
\[
u := -\frac{2t^4 + t^3z - 2t^3 + tyz + ty - yz}{t(t^3 + tz - t + y)}.
\]
We first check that this expression for $u$ defines a rational number, by showing that its denominator is nonzero.
If $y=-t^3-tz+t$ then $x=a+b+c-y-z=(t-1)(z+t^2-2t)$,
so $-t^3(t-1)^3=abc=xyz=(t-1)(z+t^2-2t)(-t^3-tz+t)z$,
or equivalently
$t(t-1)(z-1)(z+t^2-t)^2=0$;
thus either $z=1$ or $z=t-t^2$, which imply that
$(x,y,z)$ is either $(a,b,c)$ or $(t-t^2,t-t^2,t-t^2)$, contradicting our hypothesis.
Next we check that $u\ne -1$: for, otherwise we would obtain
$y=-t^2+(t^2-t^3)z^{-1}$,
so $x=a+b+c-y-z=-z+(3t-2t^2)+(t^3-t^2)z^{-1}$
and the equation $xyz=abc$ implies that $z\in\{1,t-t^2\}$,
which again gives the contradiction
$(x,y,z)\in\{((t-1)^3,-t^3,1), \,(t-t^2,t-t^2,t-t^2)\}$.
The same reasoning shows that $u\ne -t$: for, if $u=-t$ then
$z=t+(t^4-t^3)y^{-1}$,
so from $x+y+z=a+b+c$ and $xyz=abc$ we obtain $y\in\{-t^3,t-t^2\}$,
giving the same contradiction as above.  Writing $P_u=(\hat x,\hat y,\hat z)$, one can check that
\begin{align*}
\hat x-x &= -\frac{(xyz-abc)(yz-ty+t^2z-t^4+t^3-t^2)}{(yz+t^2z+t^3-t^2)(yz-ty-t^4+t^3)},\\
\hat y-y &= \frac{(xyz-abc)(y+t^3)}{(y+tz+t^3-t)(yz+t^2z+t^3-t^2)}, \quad\text{ and}\\
\hat z-z &= \frac{t(xyz-abc)(z-1)}{(y+tz+t^3-t)(yz-ty-t^4+t^3)},
\end{align*}
so that $(x,y,z)=(\hat x,\hat y,\hat z) = P_u$, which completes the proof.
\end{proof}

\section{Positive-rank elliptic curves} \label{mainproof}

In this section we prove Theorem~\ref{sumprod}, by showing that certain
elliptic curves have positve rank.
  Our proof relies on Mazur's
theorem on rational torsion of elliptic curves \cite{M}:

\begin{thm}[Mazur] \label{Mazur}
For any elliptic curve $E$ over\/ $\Q$, the torsion subgroup of $E(\Q)$ is
isomorphic to either\/ $\Z/n\Z$ (with $1\le n\le 12$ and $n\ne 11$) or\/
$\Z/2\Z\oplus \Z/2n\Z$ (with $1\le n\le 4$).
\end{thm}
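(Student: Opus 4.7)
The plan is to follow Mazur's original strategy, translating the classification into a question about rational points on modular curves and then bounding those points. For every positive integer $N$, there is a modular curve $X_1(N)/\Q$ whose non-cuspidal rational points parametrize pairs $(E,P)$ consisting of an elliptic curve $E/\Q$ together with a point $P\in E(\Q)$ of exact order $N$; torsion structures of the form $\Z/2\Z\oplus\Z/2n\Z$ are classified by a suitable fibre product involving $X_1(2n)$ and a full-level-$2$ structure. The theorem then splits into two complementary tasks: exhibit, for each group $G$ in the stated list, an elliptic curve over $\Q$ whose torsion subgroup contains $G$, and show that for $N=11$ and for $N\ge 13$ the only rational points of $X_1(N)$ are cusps (with analogous statements for the mixed case).

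The existence half is straightforward. For each $N\in\{1,\dots,10,12\}$ and each admissible mixed structure, the corresponding modular curve has genus zero and carries a rational cusp, hence is isomorphic to $\PP^1_\Q$. Writing the universal elliptic curve in Tate normal form then yields a one-parameter family of Weierstrass equations realizing the given torsion, and any single specialization avoiding the cuspidal and singular locus certifies the group occurs.

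The non-existence half is the substantive content, and for this my plan is to reduce to prime order and then attack $X_0(N)$. Any rational torsion point of composite order $mn$ yields, by multiplication, a rational point of smaller order, so via a short case analysis it suffices to rule out $N$ prime with $N=11$ or $N\ge 13$; the composite values $14,15,16,\dots$ reduce to these or to analogous statements for $X_1$. For each such $N$ the aim is to prove that $X_0(N)(\Q)$ consists entirely of cusps. The tools I would use are those Mazur developed: first, construct the \emph{Eisenstein quotient} $\tilde J$ of the Jacobian $J_0(N)$ (the quotient by the action of the Eisenstein ideal in the Hecke algebra) and perform a descent on $\tilde J$ to conclude that $\tilde J(\Q)$ is finite; composing the Abel--Jacobi embedding $X_0(N)\hookrightarrow J_0(N)$ based at a cusp with the projection to $\tilde J$ then confines $X_0(N)(\Q)$ to a finite set of fibres. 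Second, invoke a \emph{formal immersion} argument at a well-chosen prime $p$ of good reduction: one verifies that the map $X_0(N)\to\tilde J$ induces an injection on formal neighbourhoods of a cusp modulo $p$, forcing any rational point to reduce to a cusp modulo $p$; a further analysis of bad reduction at $N$ (using the Deligne--Rapoport semistable model) then lifts this to the statement that the point is a cusp.

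The main obstacle is establishing these two inputs. Proving that the Eisenstein quotient has rank zero requires deep information about congruences between weight-two cusp forms and Eisenstein series, and about the structure of the Eisenstein ideal in $\mathrm{End}(J_0(N))$; verifying the formal immersion criterion requires a delicate computation in the cotangent space at the cusp in terms of $q$-expansions of Hecke eigenforms, together with sharp control of the integral model of $X_0(N)$. These techniques lie far outside the elementary methods of the rest of this paper, and so in practice we invoke Theorem~\ref{Mazur} as a black box.
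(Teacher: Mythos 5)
The paper does not prove this statement: it is Mazur's classification theorem, stated with attribution and a citation to \cite{M}, and invoked as a black box in the proof of Theorem~\ref{sharp}. Your outline is a faithful thumbnail of how Mazur's argument actually runs (Tate normal forms on the genus-zero modular curves for the existence half; the Eisenstein quotient, descent, and the formal immersion criterion for the non-existence half), but as you concede in your final sentence it is not a proof --- the two decisive inputs, finiteness of $\tilde J(\Q)$ and the formal immersion computation, are precisely where all the difficulty lives and you defer them to the literature. That is the right call in the context of this paper, and it leaves you in exactly the same position as the authors, so there is nothing to fix; just be aware that your reduction step is stated a little too optimistically. Ruling out prime orders $N=11$ and $N\ge 13$ does not dispose of the composite cases: the orders $14$, $15$, $16$, $18$, $20$, $21$, $24$, $25$, $27$ (and $49$) have all of their proper divisors in the admissible list, so each requires a separate analysis of the corresponding $X_1(N)$ (due largely to Levi, Billing--Mahler, Ogg and Kubert, predating \cite{M}); your phrase ``or to analogous statements for $X_1$'' silently absorbs all of that work.
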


Recall that, for any $a,b,c\in\Q$, the set $S_{abc}(\Q)$ consists of all
triples $(x,y,z)$ of rational numbers such that
$x+y+z=a+b+c$ and $xyz=abc$.  Also, $E_{abc}$ is the curve in $\PP^2$ defined by the affine equation (\ref{Eabc}).
Finally, we write $\Sigma_{abc}$ for the set of permutations of the sequence $(a,b,c)$.
We will prove the following refinement of Theorem~\ref{sumprod}:

\begin{thm} \label{sharp}
Let $a,b,c$ be pairwise distinct nonzero rational numbers.
If $(a+b+c)^3=27abc$ then $E_{abc}$ has genus zero and $S_{abc}(\Q)$ is infinite.
If $(a+b+c)^3\ne 27abc$ then $E_{abc}$ is an elliptic curve which contains
the points in the set
\[
T_{abc}:=\left\{\Bigl(-AC+\frac{(A+B+C)^2}{12}, \,\frac{AC(C-A)}2\Bigr)\colon
(A,B,C) \in\Sigma_{abc}\right\},
\]
and the subgroup of $E_{abc}(\Q)$ generated by $T_{abc}$ is
\[
\begin{cases}
\Z/12\Z & \text{ if $A(B-C)^3=B(C-A)^3$ for some $(A,B,C)\in\Sigma_{abc}$}, \\
\Z/9\Z & \text{ if $AB^2+BC^2+CA^2=3ABC$ for some $(A,B,C)\in\Sigma_{abc}$}, \\
\Z\oplus \Z/3\Z & \text{ otherwise}.
\end{cases}
\]
\end{thm}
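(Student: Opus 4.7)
If $(a+b+c)^3 = 27abc$, then by Lemma \ref{genus} the curve $E_{abc}$ has genus zero, and Lemmas \ref{sing} and \ref{singpar} provide an infinite rational parametrization of $S_{abc}(\Q)$, handling this case. Assume henceforth $(a+b+c)^3 \neq 27abc$, so $E_{abc}$ is elliptic. Substituting a permutation $(A,B,C) \in \Sigma_{abc}$ into the formula for $\rho$ in Lemma \ref{E}, and using $A+B/2-s/2 = (A-C)/2$, shows that $T_{abc} = \rho(\Sigma_{abc}) \subset E_{abc}(\Q)$; the identity $\rho(C,B,A) = -\rho(A,B,C)$ then lets us write $T_{abc} = \{\pm P_1, \pm P_2, \pm P_3\}$, where $P_1, P_2, P_3$ are the $\rho$-images of the three cyclic shifts of $(a,b,c)$.

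The next step identifies a rational 3-torsion point and the additive structure among the $P_i$. Set $\tau := (s^2/12, p/2) \in I_{abc}$; the tangent to $E_{abc}$ at $\tau$ has slope $s/2$, so $x(2\tau) = (s/2)^2 - 2 \cdot s^2/12 = x(\tau)$, forcing $2\tau = -\tau$ and hence $\tau$ to have order $3$. A chord computation through $P_1$ and $-P_2 = \rho(a,c,b)$ shows that the slope is $(a-b-c)/2$, the sum-of-roots relation forces the third-intersection $u$-coordinate to be $s^2/12$, and back-substitution into the chord gives $v$-coordinate $-p/2$; thus the third intersection is $-\tau$, so $P_1 - P_2 = \tau$, and by cyclic symmetry $P_2 - P_3 = \tau$ as well. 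Therefore $P_2 = P_1 - \tau$, $P_3 = P_1 - 2\tau$, and $\langle T_{abc} \rangle = \langle P_1, \tau \rangle =: H$.

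To determine $H$, observe that none of $P_1, P_2, P_3$ equals $\mathcal{O}$, $\pm\tau$ (else some two-element product among $ac, ab, bc$ vanishes), or a 2-torsion point (else some difference $c-a$, $a-b$, or $b-c$ vanishes), and $P_i \neq \pm P_j$ for $i \neq j$ (else two of $a,b,c$ coincide). If $P_1$ has infinite order, then $\langle P_1 \rangle \cap \langle \tau \rangle = 0$ and $H \cong \Z \oplus \Z/3\Z$. If $P_1$ has finite order $n$, then Mazur's theorem combined with these distinctness constraints rules out $n \in \{1,2,3,5,6,7,8,10\}$ (for example, $n = 6$ with $\tau \in \langle P_1\rangle$ forces $P_2$ to be $\pm P_1$ or a 2-torsion point, all excluded, while $\tau \notin \langle P_1\rangle$ yields $H \supset (\Z/3\Z)^2$, not a subgroup of any group in Mazur's list), leaving $n \in \{4,9,12\}$ and correspondingly $H \cong \Z/12\Z, \Z/9\Z, \Z/12\Z$.

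Finally, to match these three cases with the stated conditions, the key step is to verify: for any $\sigma = (A,B,C) \in \Sigma_{abc}$, the relation $A(B-C)^3 = B(C-A)^3$ is equivalent to $4\rho(\sigma) = \tau$, and $AB^2+BC^2+CA^2 = 3ABC$ is equivalent to $3\rho(\sigma) = -\tau$. Granted these, a short analysis in each candidate structure for $H$ shows that $4P = \tau$ has a solution among the $\pm P_i$ iff $H \cong \Z/12\Z$ (the equation $4P = \tau$ admits solutions outside $\{\pm\tau\}$ only when $H \cong \Z/12\Z$), and $3P = -\tau$ has a solution iff $H \cong \Z/9\Z$, giving the desired trichotomy. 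The main obstacle is the polynomial verification of the two equivalences: it reduces to computing $2\rho(A,B,C)$, $3\rho$, and $4\rho$ via the Weierstrass group-law formulas and simplifying the resulting rational expressions (after clearing denominators involving $s$ and $p$) to match the stated polynomial identities in $A, B, C$; this is routine but lengthy, and best carried out with a computer algebra system.
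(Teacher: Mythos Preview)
Your proposal is correct and follows essentially the same strategy as the paper: identify $T_{abc}=\rho(\Sigma_{abc})$, show that $\tau=(s^2/12,p/2)$ has order~$3$ and that the six points of $T_{abc}$ form a single $\langle\tau\rangle$-coset together with its negative, reduce to $\langle P_1,\tau\rangle$, and invoke Mazur's theorem. The only noteworthy difference is in bookkeeping. The paper computes $2P_{ABC}$ explicitly just once and reads everything off that single formula: the $x$-coordinate shows $2P_{ABC}\notin\langle\tau\rangle$ (eliminating $\Z/2\Z\oplus\Z/6\Z$ in one stroke), the vanishing of the $y$-coordinate gives the $\Z/12\Z$ condition, and equating the $x$-coordinate with that of $-P_{ABC}+\tau$ gives the $\Z/9\Z$ condition. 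Your route replaces the $2P$ computation by a longer case-by-case elimination of orders via Mazur plus distinctness, and then characterizes the two finite cases by the equivalent conditions $4\rho(\sigma)=\tau$ and $3\rho(\sigma)=-\tau$; these are the paper's conditions transported along the relation $P_{ACB}=-P_{ABC}+\tau$, so the ``lengthy'' CAS verification you anticipate reduces to the same single $2P$ computation the paper does.
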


In light of Lemma~\ref{E}, Theorem~\ref{sumprod} follows at once from Theorem~\ref{sharp} and
the fact that $S_{abc}(\Q)$ is infinite when $abc=0$.  We now prove Theorem~\ref{sharp}.

\begin{proof}[Proof of Theorem~\ref{sharp}]
Let $a,b,c$ be pairwise distinct nonzero rational numbers.
By Lemmas~\ref{sing} and \ref{singpar}, the set $S_{abc}(\Q)$ is infinite
if $(a+b+c)^3=27abc$, and Lemma~\ref{genus} implies that $E_{abc}$ has
genus zero in this case.
Henceforth assume that $(a+b+c)^3\ne 27abc$, so
that (by Lemma~\ref{genus}) the curve $E_{abc}$ is
an elliptic curve.  Lemma~\ref{E} implies that $E_{abc}(\Q)$ contains $T_{abc}$.
For any permutation $(A,B,C)$ of $(a,b,c)$, write
\[
P_{ABC}:=\Bigl(-AC+\frac{(A+B+C)^2}{12}, \,\frac{AC(C-A)}2\Bigr).
\]
Then, in the group $E_{abc}(\Q)$, we have the relations $P_{CBA}=-P_{ABC}$
and $P_{CAB}=P_{ABC}+Q$, where $Q:=((a+b+c)^2/12,\,abc/2)$.  Crucially, we observe
that $Q$ has order $3$.  Writing $\Gamma_{abc}$ for the group generated by $T_{abc}$,
it follows that $\Gamma_{abc}=\langle P_{ABC},Q\rangle$ for any $(A,B,C)\in\Sigma_{abc}$.
In particular, if $\Gamma_{abc}$ is infinite then
$\Gamma_{abc}\cong \Z\oplus \Z/3\Z$.  Note that $P_{ABC}\ne P_{DEF}$ for any distinct $(A,B,C),(D,E,F)\in\Sigma_{abc}$,
since if $P_{ABC}$ and $P_{DEF}$ have the same $x$-coordinate then $AC=DF$ so $B=E$, whence
$P_{DEF}=P_{CBA}=-P_{ABC}\ne P_{ABC}$.
Next, considering $x$-coordinates shows that the group $\langle Q\rangle$ is
disjoint from $T_{abc}$, so $\#\Gamma_{abc}\ge 9$.  By Mazur's theorem, if $\Gamma_{abc}$
is finite then it must be either $\Z/9\/Z$, \,$\Z/12\Z$, or $\Z/2\Z\oplus\Z/6\Z$; in any
case, $\Gamma_{abc}$ has a unique subgroup of order $3$.
For any $(A,B,C)\in\Sigma_{abc}$, we compute that $2P_{ABC}$ equals
\[
\Bigl(\frac{(A+B+C)^2}{12} - \frac{AC(A-B)(B-C)}{(A-C)^2},\,
\frac{AC}{2(A-C)^3}(A(C-B)^3-C(B-A)^3)\Bigr).
\]
Examining $x$-coordinates shows that $2P_{ABC}\notin\langle Q\rangle$,
so the order of $P_{ABC}$ does not divide $6$.  It follows that
$\Gamma_{abc}\not\cong \Z/2\Z\oplus\Z/6\Z$.

We now determine all $a,b,c$ for which $\Gamma_{abc}\cong \Z/12\Z$.
First note that this occurs if and only if some $P_{ABC}$ has order $4$:
for, if $P_{ABC}$ has order $4$ then
$\Gamma_{abc}=\langle P_{ABC},Q\rangle\cong\Z/12\Z$, and if $\Gamma_{abc}\cong \Z/12\Z$
then some element of $T_{abc}$ has order $4$ because $\Z/12\Z$ contains only four elements
whose order is neither $4$ nor a divisor of $6$.
Next, $P_{ABC}$ has order $4$ if and only if $2P_{ABC}$ has order $2$; equivalently,
the $y$-coordinate of $2P_{ABC}$ is zero, which means that $A(C-B)^3=C(B-A)^3$.

Finally, we determine all $a,b,c$ for which $\Gamma_{abc}\cong \Z/9\Z$.
This occurs if and only if every $P_{ABC}$ has order $9$, which means that
$3P_{ABC}=\pm Q$.  Since $P_{CBA}=-P_{ABC}$, this says that some $P_{ABC}$ satisfies
$3P_{ABC}=Q$, or equivalently $2P_{ABC}=-P_{ABC}+Q$.  We compute
\[
-P_{ABC}+Q = \Bigl(\frac{(A+B+C)^2}{12}-AB, \,\frac 12 AB(B-A)\Bigr).
\]
Note that $2P_{ABC}\ne -(-P_{ABC}+ Q)$, since $P_{ABC}\ne Q$.
Thus, $-P_{ABC}+Q$ and $2P_{ABC}$ are equal if and only if they have the same
$x$-coordinate, which says that
\[
B(A-C)^2 = C(A-B)(B-C),
\]
or equivalently
\[
A^2B+B^2C+C^2A = 3ABC. \qedhere
\]
\end{proof}

\section{The remaining cases of Question~\ref{q}} \label{fun}

In this section we discuss Question~\ref{q} in the cases where either (\ref{first})
or (\ref{second}) does not hold.  We first show that (\ref{first}) and (\ref{second}) automatically hold in
certain situations, and use this to prove Corollary~\ref{coprime}.

\begin{lemma} \label{lc}
If $a,b,c$ are nonzero integers which are pairwise coprime and pairwise distinct,
then $a(b-c)^3\ne b(c-a)^3$ and $ab^2+bc^2+ca^2\ne 3abc$.
\end{lemma}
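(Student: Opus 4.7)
The plan is to reduce each of the two non-vanishing assertions to elementary divisibility constraints forced by pairwise coprimality, then show these constraints are incompatible with pairwise distinctness.

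I would start with the second inequality, since it is cleaner. Assume to the contrary that $ab^2+bc^2+ca^2 = 3abc$. Every term in this equation except $bc^2$ is visibly divisible by $a$, so $a \mid bc^2$. Since $\gcd(a,b) = \gcd(a,c) = 1$, this forces $a = \pm 1$. The same argument applied to $b$ (examining the term $ca^2$) and to $c$ (examining the term $ab^2$) gives $b,c \in \{\pm 1\}$ as well. But then $a,b,c$ are three pairwise distinct elements of the two-element set $\{\pm 1\}$, a contradiction.

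For the first inequality, I would argue similarly but with one extra wrinkle. Assume $a(b-c)^3 = b(c-a)^3$. Then $a \mid b(c-a)^3$, and $\gcd(a,b)=1$ gives $a \mid (c-a)^3$. The key observation is that $\gcd(a, c-a) = \gcd(a,c) = 1$, so $a \mid 1$, i.e.\ $a = \pm 1$. Symmetrically, from $b \mid a(b-c)^3$ and $\gcd(b, b-c) = \gcd(b,c) = 1$ we get $b = \pm 1$. Since $a \ne b$, we must have $\{a,b\} = \{1,-1\}$.

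To finish, I substitute each of the two possibilities back into the assumed equation. In either case the relation collapses to $(c+1)^3 = (c-1)^3$, whence $c+1 = c-1$, which is absurd. The main obstacle is really only bookkeeping of signs in this substitution step; the heart of the argument is the observation that the coprimality hypothesis propagates via $\gcd(a,c-a) = \gcd(a,c)$, so that each of $a$, $b$ (and for the second claim, $c$) is forced into $\{\pm 1\}$.
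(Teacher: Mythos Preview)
Your proof is correct and follows essentially the same approach as the paper: reduce both equations modulo $a$ (and $b$, $c$) to force each variable into $\{\pm 1\}$ via coprimality, then obtain a contradiction with pairwise distinctness. The only cosmetic difference is that for the first inequality the paper writes $b=-a$ symbolically and deduces $(b-c)^3=(a-c)^3$, whereas you substitute the explicit values $\{a,b\}=\{1,-1\}$ to reach $(c+1)^3=(c-1)^3$; these are the same computation.
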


\begin{proof}
First assume that $a(b-c)^3=b(c-a)^3$, so $a\mid b(c-a)^3$.
Since $a$ is coprime to $b$ and $c$, it is coprime to $b(c-a)^3$, so
$a\in\{-1,1\}$.  Likewise, $b\in\{1,-1\}$, so $b=-a$.  Then $a(b-c)^3=b(c-a)^3=a(a-c)^3$,
so that $b-c=a-c$ and thus $b=a$, a contradiction.

Next assume that $ab^2+bc^2+ca^2=3abc$.  Then $a\mid bc^2$, and since $a$ is
coprime to $b$ and $c$, it follows that $a\in\{1,-1\}$.  Likewise, both
$b$ and $c$ must be in $\{1,-1\}$, so $a,b,c$ cannot be pairwise distinct.
\end{proof}

Corollary~\ref{coprime} follows at once from this result and Theorem~\ref{sumprod}, together with
the fact that (\ref{first}) and (\ref{second}) hold for every permutation $(A,B,C)$ of $(-1,0,1)$.

Next we determine all $(a,b,c)$ for which either (\ref{first}) or (\ref{second}) does not hold.

\begin{prop} \label{anotherparam}
The triples $(a,b,c)$ of pairwise distinct nonzero rational numbers
such that $a(b-c)^3=b(c-a)^3$ are
\[
(r(t+1)^3, \,-rt^3, \,-rt(t+1)(2t^2+2t+1))
\]
where $r\in\Q^*$ and $t\in\Q\setminus\{-1,-\frac 12,0\}$.  The triples $(a,b,c)$ of
pairwise distinct nonzero rational numbers such that $ab^2+bc^2+ca^2=3abc$ are
\[
(rt^2, \,-r(t+1), \,rt(t+1)^2)
\]
where $r\in\Q^*$ and $t\in\Q\setminus\{-1,0\}$.  In both cases,
the pair $(r,t)$ is uniquely determined by the triple $(a,b,c)$.
\end{prop}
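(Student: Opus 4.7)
The plan is to verify that each stated parametrization maps into the claimed locus and then invert it by exhibiting $(r,t)$ as explicit rational functions of $(a,b,c)$, from which both existence and uniqueness follow. The forward verifications are direct substitutions: in the first case $b-c=rt(2t+1)(t^2+t+1)$ and $c-a=-r(t+1)(2t+1)(t^2+t+1)$, so both $a(b-c)^3$ and $b(c-a)^3$ equal $r^4 t^3(t+1)^3(2t+1)^3(t^2+t+1)^3$; in the second case both $ab^2+bc^2+ca^2$ and $3abc$ reduce to $-3r^3 t^3(t+1)^3$. These factorizations also make the excluded values of $t$ transparent, since $t^2+t+1$ and $2t^2+2t+1$ have no rational roots; so every vanishing or coincidence among the coordinates comes from the linear factors $t$, $t+1$, $2t+1$ (respectively $t$, $t+1$).

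For the inverse of the first parametrization, set $w:=(c-a)/(b-c)$, which is a well-defined rational number since $b\ne c$, satisfies $w^3=a/b$ by rearranging the hypothesis, and cannot equal $-1$ (else $a=b$). Then $t:=-1/(w+1)$ and $r:=b(w+1)^3$ are forced by the parametrization, and a direct calculation confirms that they reproduce $(a,b,c)$. Since $w$, hence $t$, hence $r$, is a prescribed rational function of $(a,b,c)$, the pair $(r,t)$ is unique. The apparent exclusions $t\in\{-1,-\tfrac12,0\}$ are automatic, since they would correspond to $w\in\{0,1,\infty\}$, each incompatible with pairwise distinctness or with the finiteness of $w$.

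The inverse of the second parametrization is the main obstacle, since the cleanest invariant $ac/b^2=t^3$ merely says that $ac/b^2$ is a rational cube and does not by itself recover $t$ rationally. My plan is to notice that in the parametrization $b-a=-r(t^2+t+1)$ and $a-c=-rt(t^2+t+1)$, so
\[
  t \;=\; \frac{a-c}{b-a},
\]
a rational function of $(a,b,c)$, well-defined because $a\ne b$. Setting $r:=a/t^2$ (sensible because $a\ne c$ forces $t\ne 0$), a short computation verifies $(rt^2,-r(t+1),rt(t+1)^2)=(a,b,c)$: the first coordinate is immediate, and the other two reduce to the hypothesis $ab^2+bc^2+ca^2-3abc=0$ via the polynomial identity
\[
  b(a-c)^2 - a(a-b)(b-c) \;=\; ab^2+bc^2+ca^2-3abc.
\]
Uniqueness is again immediate, and the exclusions $t\in\{-1,0\}$ would force $b=c$ or $a=c$, both ruled out by hypothesis.
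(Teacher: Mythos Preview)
Your proof is correct and follows essentially the same route as the paper: for each family you recover $t$ as an explicit ratio of differences of $a,b,c$ (for the second family your $t=(a-c)/(b-a)$ is literally the paper's choice), set $r$ accordingly, and verify the parametrization together with the excluded values. The only cosmetic difference is that for the first family you pass through the auxiliary $w=(c-a)/(b-c)$ with the pleasant observation $w^3=a/b$, whereas the paper writes $t=(b-c)/(a-b)$ directly; since $w+1=-1/t$, the two are the same substitution in different clothing.
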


\begin{proof}
Let $a,b,c$ be pairwise distinct nonzero rational numbers such that
$a(b-c)^3=b(c-a)^3$.  Then $t:=(b-c)/(a-b)$ is a nonzero rational number.
Further, $t\ne -1$ since otherwise $b-c=b-a$ implies $a=c$.
Finally, $t\ne -\frac 12$, since otherwise $2(b-c)=b-a$ implies $b-c=c-a$, so the identity
$a(b-c)^3=b(c-a)^3$ reduces to $a=b$.  Thus $t\in\Q\setminus\{-1,-\frac 12,0\}$, and
for $r:=a/(t+1)^3$ we compute
\[
b+rt^3=\frac{a(b-c)^3-b(c-a)^3}{(a-c)^3}
\]
and
\[
c+rt(t+1)(2t^2+2t+1)=\frac{a(b-c)^3-b(c-a)^3}{(a-b)(a-c)^2},
\]
so $(a,b,c)=(r(t+1)^3, \,-rt^3, \,-rt(t+1)(2t^2+2t+1))$.
Conversely, this last equation implies that $ab^{-1}=-(1+t^{-1})^3$, so that $t$ (and
hence $r$) is uniquely determined by $a$ and $b$; moreover, for any $r\in\Q^*$ and $t\in\Q\setminus\{-1,-\frac 12,0\}$,
if we define $a,b,c$ by this last equation then $a,b,c\in\Q^*$ are pairwise distinct and $a(b-c)^3=b(c-a)^3$.

Now let $a,b,c$ be pairwise distinct nonzero rational numbers such that
$ab^2+bc^2+ca^2=3abc$.  Then $t:=(a-c)/(b-a)$ is a nonzero rational number,
and $t\ne -1$ since $b\ne c$.  For $r:=a/t^2$ we compute
\[
b+r(t+1)=\frac{ab^2+bc^2+ca^2-3abc}{(a-c)^2}
\]
and
\[
c-rt(t+1)^2=\frac{ab^2+bc^2+ca^2-3abc}{(a-b)(a-c)},
\]
so $(a,b,c)=(rt^2, \,-r(t+1), \,rt(t+1)^2)$.
Conversely, this last equation implies that $acb^{-2}=t^3$, so that $t$
(and hence $r$) is uniquely determined by $(a,b,c)$; moreover, for any $r\in\Q^*$ and any $t\in\Q\setminus\{-1,0\}$,
if we define $a,b,c$ by this last equation then $a,b,c\in\Q^*$ are pairwise distinct and $ab^2+bc^2+ca^2=3abc$.
\end{proof}

Next we show that the failure of (\ref{first}) or (\ref{second}) does not
determine whether $S_{abc}(\Q)$ is infinite.

\begin{example}
One can check that $E_{abc}(\Q)$ is finite when $(a,b,c)$ is either $(3,10,24)$ or $(1,-2,4)$,
and infinite when $(a,b,c)$ is either $(2,15,54)$ or $(-3,4,18)$.
Here $(a,b,c)=(3,24,10)$ and $(2,54,15)$ violate (\ref{first}), but every
permutation $(A,B,C)$ of either of these triples satisfies (\ref{second}).
On the other hand, $(a,b,c)=(1,-2,4)$ and $(-3,18,4)$ violate (\ref{second}),
but every permutation $(A,B,C)$ of either of these triples satisfies (\ref{first}).
\end{example}

In the spirit of existing conjectures (e.g.\ from \cite{BMSW}), and in the absence of any reason to believe otherwise,
it seems reasonable to guess that $S_{abc}(\Q)$ is infinite for half of all triples $(a,b,c)$ of
nonzero rational numbers such that either $a(b-c)^3=b(c-a)^3$ or $ab^2+bc^2+ca^2=3abc$,
when triples are ordered by the largest absolute value of any integer occurring as either a numerator or denominator
of any rational number in the triple.  We used Magma's
non-rigorous calculation of analytic ranks of elliptic curves to compute the analytic
rank of $E_{abc}$ for all triples $(a,b,c)$ of nonzero pairwise coprime rational numbers
which violate either (\ref{first}) or (\ref{second}) and whose numerator and denominator
have absolute value at most $30$.  There are $1801$ such triples, and Magma suggests that
the analytic rank of $E_{abc}$ is zero for $783$ (or about $43.48\%$) of them.  By Lemma~\ref{E}
and the Birch--Swinnerton-Dyer conjecture, the analytic rank of $E_{abc}$ is zero precisely
when $S_{abc}(\Q)$ is finite.  If we so desire, we can avoid assuming the Birch--Swinnerton-Dyer conjecture here
by restricting to cases where the analytic rank of $E_{abc}$ is at most one, since the Birch--Swinnerton-Dyer conjecture
is known to be true in those cases by results of Gross--Zagier \cite{GZ} and Kolyvagin \cite{K}, together with \cite{BCDT}
and either \cite{BFH} or \cite{MM}.  
Although $43.48\%$ is somewhat less than $50\%$, it is
closer to $50\%$ than is usual for data involving ranks of elliptic curves, so at least we can say
that our guess is more consistent with the data than are well-established conjectures of the same
flavor \cite{BMSW}.

\section{Positive solutions} \label{secpos}

In this section we examine positive rational solutions of the system $x+y+z=a+b+c$, \,$xyz=abc$.
We will use the Poincar\'e--Hurwitz theorem (\cite[Satz 13]{H}; see also \cite[p.~173]{P}):

\begin{lemma}[Poincar\'e--Hurwitz] \label{PH}
Let $E$ be a nonsingular cubic curve in $\PP^2$ which is defined over/\ $\Q$.  If the set $E(\Q)$ is infinite,
then every open subset of $\PP^2(\R)$ which contains one point of $E(\Q)$ must contain infinitely many points of $E(\Q)$.
\end{lemma}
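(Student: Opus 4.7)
The plan is to exploit the structure of $E(\R)$ as a compact real Lie group of dimension one. First I would fix any rational point $O \in E(\Q)$ (one exists since $E(\Q)$ is infinite) as the origin of the chord-tangent group law, so that $E(\R)$ becomes a compact one-dimensional real Lie group in which $E(\Q)$ sits as a subgroup. A standard analysis of real smooth plane cubics, via a Weierstrass model $y^2 = f(x)$, shows that $E(\R)$ has one or two connected components according as $f$ has one or three real roots, each component is a topological circle, the identity component $E(\R)^0$ is isomorphic as a topological group to $S^1$, and $E(\R)^0$ has index $1$ or $2$ in $E(\R)$.

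Next, set $H := E(\Q) \cap E(\R)^0$. Since $H$ has index at most $2$ in $E(\Q)$, it is infinite. The key classical fact to invoke is the classification of closed subgroups of $S^1$: they are exactly the finite cyclic groups of roots of unity together with $S^1$ itself. Consequently the topological closure $\overline{H}$ inside $E(\R)^0 \cong S^1$ must equal all of $E(\R)^0$, so $H$ is dense in $E(\R)^0$.

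Finally, given an open set $U \subset \PP^2(\R)$ containing some $P \in E(\Q)$, the set $V := U \cap E(\R)$ is an open neighborhood of $P$ in $E(\R)$. Because the group law is continuous, translation by $-P$ is a self-homeomorphism of $E(\R)$, so the translate $V - P$ is an open neighborhood of $O$ and therefore contains an open neighborhood $W$ of $O$ inside $E(\R)^0$. Since $H$ is dense in the one-dimensional manifold $E(\R)^0$, which has no isolated points, $W$ must contain infinitely many elements of $H$ (any dense subset of a Hausdorff space without isolated points meets every nonempty open set infinitely often). Translating back by $P$ then yields infinitely many elements of $P + H \subseteq E(\Q)$ inside $U$, as required. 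The only ingredients beyond elementary point-set topology are the component structure of $E(\R)$ and the classification of closed subgroups of $S^1$, both entirely standard, so I expect no serious obstacle.
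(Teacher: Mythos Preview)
Your argument is correct. The paper does not supply a proof of this lemma at all; it simply cites the classical references of Hurwitz \cite{H} and Poincar\'e \cite{P} and uses the result as a black box. What you have written is the standard modern proof: identify $E(\R)$ with a compact one-dimensional real Lie group, so that the identity component is a copy of $S^1$, and then invoke the fact that every infinite subgroup of $S^1$ has closure equal to $S^1$ and is therefore dense. The translation step at the end is exactly the right way to pass from density near the origin to density near an arbitrary rational point. The original arguments of Poincar\'e and Hurwitz, written before the Lie-group viewpoint was available, proceed more concretely by iterating the chord-and-tangent construction to manufacture new rational points arbitrarily close to a given one; your version is a clean repackaging of the same idea, and arguably buys you the conclusion with less explicit computation.
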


We now prove a refined version of Corollary~\ref{sumprodpos}, which will be needed in the next section.

\begin{lemma} \label{AMGM}
Let $a,b,c$ be pairwise distinct positive rational numbers such that every permutation $(A,B,C)$ of $(a,b,c)$
satisfies $A(B-C)^3\ne B(C-A)^3$.  Then $S_{abc}(\Q)$ contains infinitely many points in any open subset of\/ $\R^3$ which
contains $(a,b,c)$.
\end{lemma}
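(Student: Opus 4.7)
The plan is to reduce this to Theorem~\ref{sharp} plus the Poincaré--Hurwitz lemma, using the homeomorphism $\rho$ of Lemma~\ref{E} to transport neighborhoods between $S_{abc}(\R)$ and $E_{abc}(\R)$.

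First I would verify that the two AM-GM-type hypotheses needed for Theorem~\ref{sharp} are automatic in this setting. Since $a,b,c$ are pairwise distinct and positive, AM-GM gives $(a+b+c)^3 > 27abc$, so by Lemma~\ref{genus} the curve $E_{abc}$ is an elliptic curve. Likewise, for any permutation $(A,B,C)$ of $(a,b,c)$, AM-GM applied to the three positive reals $AB^2, BC^2, CA^2$ gives $AB^2+BC^2+CA^2 \ge 3\sqrt[3]{A^3B^3C^3} = 3ABC$ with equality only if $AB^2=BC^2=CA^2$, i.e.\ $A=B=C$, which is excluded. So condition~(\ref{second}) holds for every permutation. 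Together with the hypothesis that (\ref{first}) holds for every permutation, Theorem~\ref{sharp} then forces $\Gamma_{abc}\cong \Z\oplus\Z/3\Z$, and in particular $E_{abc}(\Q)$ is infinite.

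Next I would transport a given neighborhood to $E_{abc}$. Let $U\subset\R^3$ be an open set containing $(a,b,c)$. By Lemma~\ref{E}, $\rho$ is a homeomorphism $S_{abc}(\R)\to E_{abc}(\R)\setminus I_{abc}$, so $\rho(U\cap S_{abc}(\R))$ is open in $E_{abc}(\R)\setminus I_{abc}$ and contains the rational point $\rho(a,b,c)$, which one computes equals $P_{abc}=\bigl(-ac+(a+b+c)^2/12,\,ac(c-a)/2\bigr)\in T_{abc}$. Choose an open set $V\subset\PP^2(\R)$ with $V\cap(E_{abc}(\R)\setminus I_{abc})=\rho(U\cap S_{abc}(\R))$, and shrink $V$ (by subtracting small balls around the three points of $I_{abc}$) so that $V\cap E_{abc}(\R)=V\cap(E_{abc}(\R)\setminus I_{abc})$; this shrinking preserves the property that $V$ contains $P_{abc}$.

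Finally, since $E_{abc}(\Q)$ is infinite and $V$ is an open subset of $\PP^2(\R)$ containing the rational point $P_{abc}$, the Poincaré--Hurwitz lemma (Lemma~\ref{PH}) yields infinitely many points of $E_{abc}(\Q)$ in $V$. Each such point lies in $V\cap E_{abc}(\R)=\rho(U\cap S_{abc}(\R))$ and is rational, so pulling back by the bijection $\rho\colon S_{abc}(\Q)\to E_{abc}(\Q)\setminus I_{abc}$ produces infinitely many points of $S_{abc}(\Q)\cap U$. I expect the main bookkeeping obstacle to be the translation between ``open in $S_{abc}(\R)$'' and ``open in $\PP^2(\R)$'' needed to invoke Lemma~\ref{PH}, but since the three exceptional points of $I_{abc}$ are isolated from $P_{abc}$ this is only a minor technicality; no genuinely new idea is required beyond the AM-GM observation that promotes Theorem~\ref{sharp} from rank-$0$-possible to positive-rank in the positive regime.
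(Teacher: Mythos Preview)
Your proposal is correct and follows essentially the same route as the paper: AM--GM rules out both $(a+b+c)^3=27abc$ and condition~(\ref{second}), Theorem~\ref{sharp} then gives infinitely many rational points on $E_{abc}$, and Poincar\'e--Hurwitz combined with the homeomorphism $\rho$ of Lemma~\ref{E} transports density back to $S_{abc}(\Q)$ near $(a,b,c)$. Your topological bookkeeping (constructing $V$ and excising $I_{abc}$) is a bit more explicit than the paper's, which simply asserts the conclusion from the homeomorphism, but the argument is the same.
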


\begin{proof}
Since $a,b,c$ are distinct and positive, their arithmetic mean is greater than their
geometric mean, so $(a+b+c)^3>27abc$.  Likewise, for any permutation $(A,B,C)$ of $(a,b,c)$,
comparing the arithmetic and geometric means of $AB^2$, $BC^2$ and
$CA^2$ shows that $AB^2+BC^2+CA^2\ge 3ABC$, with equality occurring if and only if $AB^2=BC^2=CA^2$.
This equality condition implies that $A^2B^4=(AB^2)^2=(BC^2)(CA^2)=A^2BC^3$, so that $B^3=C^3$, which is impossible
since $B,C$ are distinct rational numbers.  Thus, Theorem~\ref{sharp} tells us that $E_{abc}$ is an elliptic curve containing infinitely
many rational points, so by Lemma~\ref{PH} the set $E_{abc}(\Q)$ has infinite intersection with any neighborhood in
$\PP^2(\R)$ of any point $P\in E_{abc}(\Q)$.  Since the map $\rho$ from Lemma~\ref{E} is a homeomorphism from $S_{abc}(\R)$
to $E_{abc}(\R)\setminus I_{abc}$, it follows that $S_{abc}(\Q)$ has infinite intersection with any neighborhood in $\R^3$ of 
$\rho^{-1}(P)$ if $P\notin I_{abc}$.  Taking $P=\rho((a,b,c))$ yields the result.
\end{proof}

Corollary~\ref{sumprodpos} follows from Lemma~\ref{AMGM} by taking the open set to be an open ball centered at $(a,b,c)$ of
radius less than the smallest of $a,b,c$.  Next, Corollary~\ref{coprimepos} follows at once from Corollary~\ref{sumprodpos} and
Lemma~\ref{lc}.


\section{Equal sums and equal sums of cubes} \label{seccube}

In this section we analyze the system of equations $x+y+z=a+b+c$, \,$x^3+y^3+z^3=a^3+b^3+c^3$
for fixed $a,b,c\in\Q$.  This system has been studied at least since 1915 \cite{Ger}, and more recently in the papers
\cite{B,BB,BL,C,C2,L,Ren}, inspired in part by the occurrence of this system in the physics literature
in the context of zeros of the $6j$ Racah coefficients \cite{BL}.

We use a substitution from \cite{BL} (in slightly modified form) to transform
this system into the system $u+v+w=d+e+f$, \,$uvw=def$ for certain $d,e,f\in\Q$.
For any field $K$ with $\charp(K)\ne 2$, define
$\psi\colon K^3\to K^3$ and $\phi\colon K^3\to K^3$ via
\begin{align*}
\psi((x,y,z))&=\Bigl(\frac{y+z}2, \,\frac{x+z}2, \,\frac{x+y}2\Bigr) \\
\phi((x,y,z))&=(-x+y+z, \,x-y+z, \,x+y-z).
\end{align*}
For fixed $a,b,c\in\Q$, let $U_{abc}$ be the variety defined by
$x+y+z=a+b+c$ and $x^3+y^3+z^3=a^3+b^3+c^3$.

\begin{lemma} \label{coord}
The functions $\psi$ and $\phi$ are bijective and inverse to one another.  For any $a,b,c\in\Q$, we have
$U_{abc}(K)=\phi(S_{\psi((a,b,c))}(K))$ and $S_{abc}(K)=\psi(U_{\phi((a,b,c))}(K))$.
\end{lemma}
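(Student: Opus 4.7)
The plan is to reduce everything to a single symmetric-function identity. First I would verify by direct substitution that $\phi\circ\psi$ and $\psi\circ\phi$ act as the identity on $K^3$: in each component of either composition, two signs cancel and the factor of $2$ absorbs the $1/2$. This uses $\charp(K)\ne 2$, and shows that $\psi$ and $\phi$ are mutually inverse bijections.

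The heart of the proof is the polynomial identity
$$3(y+z)(x+z)(x+y) = (x+y+z)^3 - (x^3+y^3+z^3),$$
which is a routine expansion (both sides equal $3e_1 e_2 - 3e_3$ in the elementary symmetric functions of $x,y,z$). Writing $(X,Y,Z) := \psi((x,y,z))$, this identity rewrites as
$$24\,XYZ = (x+y+z)^3 - (x^3+y^3+z^3),$$
while $X+Y+Z = x+y+z$ is immediate from the definition of $\psi$.

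Next I would establish the set equality $\psi(U_{abc}(K)) = S_{\psi((a,b,c))}(K)$. Set $(A',B',C') := \psi((a,b,c))$. The condition $X+Y+Z = A'+B'+C'$ is just $x+y+z = a+b+c$, and \emph{given} this linear condition the displayed identity, applied to both triples, makes $XYZ = A'B'C'$ equivalent to $x^3+y^3+z^3 = a^3+b^3+c^3$. So $(x,y,z) \in U_{abc}(K)$ if and only if $\psi((x,y,z)) \in S_{\psi((a,b,c))}(K)$. Applying $\phi = \psi^{-1}$ to both sides of the resulting set equality yields the first formula $U_{abc}(K) = \phi(S_{\psi((a,b,c))}(K))$.

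The second formula then comes for free: substituting $\phi((a,b,c))$ for $(a,b,c)$ in the first formula and using $\psi\circ\phi = \mathrm{id}$ gives $U_{\phi((a,b,c))}(K) = \phi(S_{abc}(K))$, and applying $\psi$ produces $S_{abc}(K) = \psi(U_{\phi((a,b,c))}(K))$. The only nontrivial step is the symmetric-function identity above; everything else is bookkeeping about the inverse pair $(\psi,\phi)$, so no step is really an obstacle.
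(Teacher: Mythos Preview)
Your proof is correct and follows essentially the same route as the paper. Both arguments hinge on the single identity $24\,XYZ=(x+y+z)^3-(x^3+y^3+z^3)$ for $(X,Y,Z)=\psi((x,y,z))$; the paper obtains it by expanding $(-u+v+w)^3+(u-v+w)^3+(u+v-w)^3$, whereas you obtain it from the factorization $3(y+z)(x+z)(x+y)=(x+y+z)^3-(x^3+y^3+z^3)$, but these are the same computation viewed from opposite ends of the bijection $\psi$.
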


\begin{proof}
It is easy to check that both $\phi\circ\psi$ and $\psi\circ\phi$ are the identity map on $K^3$,
which implies
that they are inverses and they are both bijective.  Letting $s\colon K^3\to K$ be the map $s((x,y,z))=x+y+z$, we see that
$s\circ\phi=s=s\circ\psi$.
Pick any $a,b,c,x,y,z\in K$ such that $x+y+z=a+b+c$.
Let $(u,v,w)=\psi((x,y,z))$ and $(d,e,f)=\psi((a,b,c))$, so that also $(x,y,z)=\phi((u,v,w))$ and $(a,b,c)=\phi((d,e,f))$.
Then we have 
\[
x^3+y^3+z^3=(-u+v+w)^3+(u-v+w)^3+(u+v-w)^3=(u+v+w)^3-24uvw,
\]
and likewise
$a^3+b^3+c^3=(d+e+f)^3-24def$.  Since 
\[
u+v+w=x+y+z=a+b+c=d+e+f,\] it follows that
$x^3+y^3+z^3$ and $a^3+b^3+c^3$ are equal if and only if $uvw$ and $def$ are equal.
Thus $U_{abc}(K)=\phi(S_{def}(K))$ and $S_{def}(K)=\psi(U_{abc}(K))$.
\end{proof}

We conclude this paper with proofs of Propositions~\ref{cube} and \ref{poscube}.

\begin{proof}[Proof of Proposition~\ref{cube}]
Write $(d,e,f):=\psi((a,b,c))$, so that Lemma~\ref{coord} exhibits a bijection
between $U_{abc}(\Q)$ and $S_{def}(\Q)$.  Since $a,b,c$ are pairwise distinct, also
$d,e,f$ are pairwise distinct.  By Theorem~\ref{sumprod}, the set $S_{def}(\Q)$ is
infinite so long as every permutation $(D,E,F)$ of $(d,e,f)$ satisfies both
$D(E-F)^3\ne E(F-D)^3$ and $DE^2+EF^2+FD^2\ne 3DEF$.  These hypotheses are
equivalent to the assertion that (\ref{third}) and (\ref{fourth}) hold for
every permutation $(A,B,C)$ of $(a,b,c)$, so the result follows.
\end{proof}

\begin{proof}[Proof of Proposition~\ref{poscube}]
Write $(d,e,f):=\psi((a,b,c))$, so $d,e,f$ are pairwise distinct positive rational
numbers.  Our hypothesis on $a,b,c$ implies that $D(E-F)^3\ne E(F-D)^3$ for every permutation $(D,E,F)$ of $(d,e,f)$.
Thus, by Lemma~\ref{AMGM}, the set $S_{def}(\Q)$ contains infinitely many points in any open subset of $\R^3$ which
contains $(d,e,f)$.  Since $\phi$ is a homeomorphism from $\R^3$ to itself, and $(a,b,c)=\phi((d,e,f))$ is in $\phi(S_{def}(\Q))$,
it follows that $\phi(S_{def}(\Q))$ contains infinitely many points in any open subset of $\R^3$ which contains $(a,b,c)$.
Finally, Lemma~\ref{coord} shows that $\phi(S_{def}(\Q))=U_{abc}(\Q)$, so the result follows by choosing the open set to be
an open ball centered at $(a,b,c)$ of radius less than the smallest of $a,b,c$.
\end{proof}

\begin{thebibliography}{99}
\newcommand{\au}[1]{{#1},}
\newcommand{\ti}[1]{\textit{#1},}
\newcommand{\jo}[1]{{#1}}
\newcommand{\vo}[1]{\textbf{#1}}
\newcommand{\yr}[1]{(#1),}
\newcommand{\pp}[1]{#1.}
\newcommand{\ppa}[1]{#1,}
\newcommand{\pps}[1]{#1;}
\newcommand{\bk}[1]{{#1},}
\newcommand{\inbk}[1]{in: \bk{#1}}
\newcommand{\xxx}[1]{arXiv:#1.}

\bibitem{BMSW}
\au{B. Bektemirov, B. Mazur, W. Stein and M. Watkins}
\ti{Average ranks of elliptic curves: tension between data and conjecture}
\jo{Bull. Amer. Math. Soc.}
\vo{44}
\yr{2007}
\pp{233--254}

\bibitem{B}
\au{A. Bremner}
\ti{Diophantine equations and nontrivial Racah coefficients}
\jo{J. Math. Phys.}
\vo{27}
\yr{1986}
\pp{1181--1184}

\bibitem{BB}
\au{A. Bremner and S. Brudno}
\ti{A complete determination of the zeros of weight-1 $6j$ coeffiicients}
\jo{J. Math. Phys.}
\vo{27}
\yr{1986}
\pp{2613--2615}

\bibitem{BCDT}
\au{C. Breuil, B. Conrad, F. Diamond and R. Taylor}
\ti{On the modularity of elliptic curves over $\Q$: wild 3-adic exercises}
\jo{J. Amer. Math. Soc.}
\vo{14}
\yr{2001}
\pp{843--939}

\bibitem{BL}
\au{S. Brudno and J. D. Louck}
\ti{Nontrivial zeros of weight 1 $3j$ and $6j$ coefficients: Relation to
diophantine equations of equal sums of like powers}
\jo{J. Math. Phys.}
\vo{26}
\yr{1985}
\pp{2092--2095}

\bibitem{BFH}
\au{D. Bump, S. Friedberg and J. Hoffstein}
\ti{Nonvanishing theorems for $L$-functions of modular forms and their derivatives}
\jo{Invent. Math.}
\vo{102}
\yr{1990}
\pp{543--618}

\bibitem{C}
\au{A. Choudhry}
\ti{Symmetric diophantine systems}
\jo{Acta Arith.}
\vo{59}
\yr{1991}
\pp{291--307}

\bibitem{C2}
\au{A. Choudhry}
\ti{Some diophantine problems concerning equal sums of integers and their cubes}
\jo{Hardy--Ramanujan J.}
\vo{33}
\yr{2010}
\pp{59--70}

\bibitem{Ger}
\au{A.~G\'erardin}
\jo{L'interm\'ediaire des math.}
\vo{22}
\yr{1915}
\pp{130--132}

\bibitem{GZ}
\au{B. H. Gross and D. B. Zagier}
\ti{Heegner points and derivatives of $L$-series}
\jo{Invent. Math.}
\vo{84}
\yr{1986}
\pp{225--320}

\bibitem{H}
\au{A. Hurwitz}
\ti{\"Uber tern\"are diophantische Gleichungen dritten Grades}
\jo{Vierteljahrsschr. Naturf. Ges. Z\"urich}
\vo{62}
\yr{1917}
\pp{207--229}

\bibitem{K2}
\au{J. B. Kelly}
\ti{Partitions with equal products (II)}
\jo{Proc. Amer. Math. Soc.}
\vo{107}
\yr{1989}
\pp{887--893}

\bibitem{K}
\au{V. A. Kolyvagin}
\ti{Finiteness of $E(\Q)$ and $\Sha(E,\Q)$ for a subclass of Weil curves}
\jo{Math. USSR-Izv.}
\vo{32}
\yr{1989}
\pp{523--541}

\bibitem{L}
\au{J. J. Labarthe}
\ti{Parametrization of the linear zeros of $6j$ coefficients}
\jo{J. Math. Phys.}
\vo{27}
\yr{1986}
\pp{2964--2965}

\bibitem{M}
\au{B. Mazur}
\ti{Modular curves and the Eisenstein ideal}
\jo{Publ. Math. I.H.E.S.}
\vo{47}
\yr{1977}
\pp{33--186}

\bibitem{Mordell}
\au{L. J. Mordell}
\ti{Rational quadrilaterals}
\jo{J. London Math. Soc.}
\vo{35}
\yr{1960}
\pp{277--282}

\bibitem{MM}
\au{M. R. Murty and V. K. Murty}
\ti{Mean values of derivatives of modular $L$-series}
\jo{Ann. of Math. (2)}
\vo{133}
\yr{1991}
\pp{447--475}

\bibitem{P}
\au{H. Poincar\'e}
\ti{Sur les propri\'et\'es arithm\'etiques des courbes alg\'ebriques}
\jo{J. Math. Pures Appl. (5)}
\vo{7}
\yr{1901}
\pp{161--233}

\bibitem{Ren}
\au{R. Ren and D. Yang}
\ti{A Diophantine problem from mathematical physics}
preprint, 2012.

\bibitem{SS}
\au{M. Sadek and N. El-Sissi}
\ti{Partitions with equal products and elliptic curves}
\xxx{1303.6705v1, 26 Mar 2013}

\bibitem{SSZ}
\au{M. Satriano, Z. Scherr and M. E. Zieve}
\ti{Complete reducibility of polynomials with varying coefficients}
preprint.

\bibitem{S}
\au{A. Schinzel}
\ti{Triples of positive integers with the same sum and the same product}
\jo{Serdica Math. J.}
\vo{22}
\yr{1996}
\pp{587--588}

\bibitem{U}
\au{M. Ulas}
\ti{On some Diophantine systems involving symmetric polynomials}
\jo{Math. Comp.},
to appear.

\bibitem{ZC2}
\au{Y. Zhang and T. Cai}
\ti{$n$-tuples of positive integers with the same second elementary symmetric function value
and the same product}
\jo{J. Number Theory}
\vo{132}
\yr{2012}
\pp{2065--2074}

\bibitem{ZC1}
\au{Y. Zhang and T. Cai}
\ti{$n$-tuples of positive integers with the same sum and the same product}
\jo{Math. Comp.}
\vo{82}
\yr{2013}
\pp{617--623}

\bibitem{Z}
\au{M. E. Zieve}
\ti{A remark on the paper ``$N$-tuples of positive integers with the same sum
and the same product'' by Zhang and Cai}
\jo{Math. Comp.}, to appear.

\end{thebibliography}
\end{document}